\setlist[enumerate]{leftmargin=.5in}
\setlist[itemize]{leftmargin=.5in}
\crefname{hypothesis}{Hypothesis}{Hypotheses}
\title{Towards Stable Radial Basis Function Methods for Linear Advection Problems\thanks{\monthyeardate\today \corresponding{Jan Glaubitz (\email{Jan.Glaubitz@Dartmouth.edu}
)} 
\funding{This work is partially supported by the German Research Foundation (DFG, Deutsche Forschungsgemeinschaft) \#GL 927/1-1 (Glaubitz), SNF \#175784 (Le M\'el\'edo and \"Offner), and UZH Postdoc Forschungskredit \#FK-19-104 (\"Offner).}}}
\author{Jan Glaubitz\thanks{Department of Mathematics, Dartmouth College, Hanover, NH 03755, USA}
\and 
Elise Le M\'el\'edo\thanks{Institut f\"ur Mathematik, Universit\"at Z\"urich, CH-8057 Z\"urich, Switzerland} 
\and 
Philipp \"Offner\thanks{Institut f\"ur Mathematik, Johannes Gutenberg Universität, 55099 Mainz, Germany}
}
\DeclareMathOperator{\diag}{diag}
\DeclareMathOperator*{\cond}{cond}
\newcommand{\scp}[2]{\left\langle{#1,\, #2}\right\rangle} 
\renewcommand{\d}{\mathrm{d}} 
\newcommand{\intd}{\, \mathrm{d}}
\newcommand{\N}{\mathbb{N}}
\newcommand{\R}{\mathbb{R}} 
\newcommand{\fnum}{f^{\mathrm{num}}}
\newcommand{\cmark}{\ding{51}}%
\begin{document}

\maketitle

\begin{abstract}
	In this work, we investigate (energy) stability of global radial basis function (RBF) methods for linear advection problems. 
Classically, boundary conditions (BC) are enforced strongly in RBF methods. 
By now it is well-known that this can lead to stability problems, however. 
Here, we follow a different path and propose two novel RBF approaches which are based on a weak enforcement of BCs.
By using the concept of flux reconstruction and simultaneous approximation terms (SATs), respectively, we are able to prove that both new RBF schemes are strongly (energy) stable. 
Numerical results in one and two spatial dimensions for both scalar equations and systems are presented, supporting our theoretical analysis. 
\end{abstract}

\begin{keywords}
  Linear advection, radial basis function methods, energy stability, flux reconstruction, simultaneous approximation terms
\end{keywords}

\begin{AMS}
	35L65, 41A05, 41A30, 65D05, 65M12
\end{AMS}

\section{Introduction} 
\label{sec:introduction}

Since their introduction in Hardy's work \cite{hardy1971multiquadric} on cartography in 1971, RBFs have become a powerful tool in numerical analysis, including multivariate interpolation and approximation theory. 
This is because they are, for instance, easy to implement, allow arbitrary scattered data, and can be highly accurate.
Hence, RBFs are also often used in the numerical treatment of partial differential equations (PDEs) \cite{kansa1990multiquadrics, fasshauer1996solving, hon1998efficient, kansa2000circumventing, larsson2003numerical, platte2004computing, fornberg2015primer} and are considered as a viable alternative to more traditional methods such as finite difference (FD), finite volume (FV), finite element (FE), and spectral schemes.
In this work, we focus on global RBF methods for linear advection problems of the form
\begin{equation}\label{eq:linear-adv}
  \partial_t \boldsymbol{u} + \sum_{r=1}^d A_r \partial_{x_r} \boldsymbol{u} = 0, 
  \quad \boldsymbol{x} = (x_1,\dots,x_d) \in \Omega \subset \R^d, \ t> 0, 
\end{equation}
subject to appropriate boundary and initial conditions (ICs); see \S \ref{sec:problem} for more details. 
Here, the vector of unknowns ${\boldsymbol{u} = (u_1,\dots,u_m)^T}$ contains the \emph{conserved variables}.
Note that under certain assumptions on the matrices ${A_r \in \R^{m \times m}}$, ${r=1,\dots,d}$, this setting yields the subclass of hyperbolic conservation laws with linear flux functions \cite{lax1973hyperbolic, dafermos2010hyperbolic, whitham2011linear}.

\subsection{State of the Art}

Even though RBF methods have a long-standing history in the context of numerical PDEs, their stability theory can still be considered as under-developed, especially when compared to more traditional methods.   
For instance, to the best of our knowledge, energy stability for RBF methods has only been considered in the recent works \cite{glaubitz2019stability} and \cite[Chapter 7.2]{glaubitz2020shock}. 
There, it was shown that usual RBF methods --- for which BCs are enforced strongly --- are not energy stable and might therefore produce physically unreasonable numerical solutions. 
Furthermore, it appears that the differentiation matrices of RBF methods encountered in time-dependent PDEs often have eigenvalues with a positive real part resulting in unstable methods; see \cite{platte2006eigenvalue}.
Hence, in the presence of rounding errors, these methods are less accurate \cite{kansa2000circumventing, platte2004computing, schaback1995error} and can become unstable in time unless a dissipative time integration method \cite{martel2016stability, platte2006eigenvalue}, artificial dissipation 
\cite{flyer2016enhancing, ranocha2018stability, glaubitz2016artificial, glaubitz2019smooth, glaubitz2019analysis}, 
or some other stabilizing technique 
\cite{scarnati2018using, gelb2019numerical, glaubitz2019high, hesthaven2008filtering, glaubitz2018application, glaubitz2019shock, don2016hybrid}  
is used.
So far, this issue was only overcome for problems which are free of BCs \cite{martel2016stability}. 
Especially, it was proven in \cite{martel2016stability} that RBF collocation methods are time-stable (in the sense of eigenvalues for linear problems) for all conditionally positive definite RBFs and node distributions when no BCs  are present. 
RBF methods might therefore be well suited for periodic domains, such as circles and spheres, but not for applications with inflow-outflow conditions or general initial-boundary-value (IBV) problems where periodicity of the computational domain $\Omega$ cannot be assumed.

\subsection{Our Contribution}

The present work strives to establish stable RBF methods also in the presence of BCs.
In particular, we propose and investigate two new approaches that involve a weak enforcement of BCs. 
These approaches yield two novel RBF schemes which are provably energy stable for linear advection equations \eqref{eq:linear-adv}.
The first method draws inspiration from flux reconstruction (FR) schemes \cite{huynh2007flux}, a subclass of spectral element (SE) methods, and is subsequently referred to as the \emph{FR-RBF} method. 
The second method, on the other hand, is inspired by SATs. 
These originate from FD methods \cite{kreiss1989initial, gustafsson1995time, gustafsson2007high} and, together with summation by parts (SBP) operators \cite{svard2014review,fernandez2014review}, can be used to construct energy stable methods for many problems. 
Recently, this technique has been demonstrated to be viable also in the context of FE methods \cite{abgrall2019analysis, abgrall2019analysis_2}. 
Here, we adapt this approach to construct energy stable RBF methods for linear advection equations. Henceforth, this second class of energy stable RBF methods will be called \emph{SAT-RBF methods}.
Both methods are thoroughly investigated --- theoretically as well as numerically --- and their  advantages but also pitfalls are highlighted.
In our opinion, these methods should not be considered as some kind of 'ultimate' RBF schemes for linear advection equations. 
Rather, we hope that the present investigation will pave the way towards the development of a more mature stability theory for RBF methods. 
Moreover, the present work reveals some new connections between RBF methods and classical FD and FE schemes.
Finally, it should be stressed that we focus on global RBF methods. 
While the extension to local RBF methods, e.\,g.\ RBF-FD methods \cite[Chapter 5]{fornberg2015primer}, would be highly desirable, such an investigation would exceed the scope of this manuscript. 
Yet, we intend to address energy stability for local RBF methods in future works.

\subsection{Outline}

The rest of this manuscript is organized as follows. 
In \S \ref{sec:problem}, we detail the problem statement and main goal of this work, which is the construction of energy stable RBF methods for linear advection equations. 
\S \ref{sec:RBF_methods} then provides a short recap of all necessary preliminaries on RBF methods. 
The heart of this work are \S \ref{sec:RBF_FR} and \S \ref{sec:RBF_SAT}, where we introduce and analyze two new approaches to weakly enforce BCs in RBF schemes. 
This results in the FR-RBF and SAT-RBF method.
Both schemes are proven to be conservative and energy stable for linear advection equations.
Yet, it should be pointed out that problems related to numerical stability arise in the construction of the FR-RBF method, as it can already be observed for scalar linear advection problems in one spatial dimension. 
We therefore only address the extension of the SAT-RBF method to multiple spatial dimensions and systems. 
Some other extensions are discussed in \S \ref{sec:extensions}. 
These include (entropy) stability for nonlinear problems, local RBF-FD methods, and replacing exact integrals by discrete quadrature/cubature formulas.
Finally, \S \ref{sec:num} provides numerical results for both methods, along with a comparison with the usual RBF method (where the BCs are enforced strongly). 
The tests are performed for scalar equations and systems in one and two spatial dimensions. 
Concluding thoughts and some final remarks are offered in \S \ref{sec:summary}.  
\section{Problem Statement} 
\label{sec:problem} 

Let $\Omega \subset \R^d$ be a bounded domain with boundary $\partial \Omega$.
We consider linear advection equations of the form \eqref{eq:linear-adv} equipped with suitable IC and BC: 
\begin{equation}\label{eq:model_problem}
\begin{aligned}
	\partial_t \boldsymbol{u} + \sum_{r=1}^d A_r \partial_{x_r} \boldsymbol{u} & = 0, 
  		\quad && \boldsymbol{x} \in \Omega, \ t> 0, \\ 
  	\boldsymbol{u} & = \boldsymbol{u}_{\text{init}}, 
		\quad && \boldsymbol{x} \in \Omega, \ t = 0, \\ 
  	\boldsymbol{u} & = \boldsymbol{g} , 
		\quad && \bm{x} \in \partial \Omega, \ t > 0,  
\end{aligned}
\end{equation} 
where $\bm{u}_{\text{init}}$ denotes the IC and $\bm{g}$ describes the BC. 
Following \cite{nordstrom2017roadmap}, the conditions on $\bm{u}_{\text{init}}$ and $\bm{g}$  are chosen such that \eqref{eq:model_problem} is well-posed. 
For ease of notation, we focus on the one-dimensional case with positive constant velocity ${A_1 = a > 0}$ for the moment. 
Then, \eqref{eq:model_problem} reduces to
\begin{equation}\label{eq:linear_ad}
\begin{aligned}
    \partial_t u+ a \partial_x u & = 0, \quad && x_L < x < x_R, \ t>0,\\
    u(x,0) & = u_{\text{init}}(x), \quad   && x_L\leq x\leq x_R,\\
    u(x_L,t) & = g(t), \quad &&  t > 0.
\end{aligned}
\end{equation}
Note that in this case the BC is only defined at the left boundary. 
In general, the BC will be enforced at the inflow part of the boundary, denoted by $\partial \Omega^-_.$
As it is described in \S \ref{sec:RBF_methods}, the spatial and temporal discretization of \eqref{eq:linear_ad} is often decoupled, yielding an ordinary differential equation (ODE), called the \emph{semidiscrete equation}, which is then integrated in time.
Similarly to discretizations used in the context of FE methods, for a fixed time $t$, the function ${u(\cdot,t):[x_L,x_R] \to \R}$ is approximated by a suitable combination of 
basis functions $\varphi_1,\dots,\varphi_N$ whose coefficients are to be computed. 

Let us denote this semidiscretization at a fixed time $t$ by ${u_N(t):[x_L,x_R] \to \R}$. 
Thus, ${u(x,t) \approx u_N(x,t)}$ for all ${x \in [x_L,x_R]}$ and fixed $t\geq0$ and \eqref{eq:linear_ad} becomes 
\begin{equation}\label{eq:linear_ad:semidisc}
\begin{aligned}
    \partial_t u_N + a \partial_x u_N & = 0, \quad && x_L < x < x_R, \ t>0,\\
    u_N(x,0) & = (u_{\text{init}})_N(x), \quad   && x_L\leq x\leq x_R,\\
    u_N(x_L,t) & = g(t), \quad &&  t > 0,
\end{aligned}
\end{equation}
where $(u_{\text{init}})_N$ denotes is the approximation of the initial condition $u_{\text{init}}$.
In this context, stability of a semidiscretization is usually defined as follows; see \cite{nordstrom2017roadmap}.

\begin{definition}\label{def:energy_stable}
 The semidiscrete method \eqref{eq:linear_ad:semidisc} is called \emph{strongly energy stable} if 
 \begin{equation}\label{es:energy_stable}
  	\norm{u_N(t)}^2 \leq K(t) \left( \norm{(u_{\text{init}})_N}^2 + \max_{s \in [0,t]} |g(s)|^2  \right)
 \end{equation}
 holds for some $K(t)$ bounded for any finite $t$ and independent of $u_{\text{init}}$, $g$, 
 and $N$ (the grid's resolution). 
 Furthermore, $\norm{\cdot}$ is the norm corresponding to some inner product on the linear space $V_N$ spanned by the basis functions; that is, $V_N = \operatorname{span}\{ \varphi_1,\dots,\varphi_N \}$. 
\end{definition}

Here, we consider the $L^2$ norm, given by
\begin{equation}
	\norm{f}^2 = \int_{\Omega} f^{2}(x) \intd x, \quad f \in L^2(\Omega), 
\end{equation} 
which is a usual choice for $\norm{\cdot}$. 
Other often considered norms include 
certain Sobolev \cite{vincent2011new} and discrete norms 
\cite{gassner2013skew,ranocha2016summation,ranocha2018stability,offner2018stability,glaubitz2019stableDG}. 
In the context of RBF methods, appropriate discrete norms would correspond to stable high-order quadrature/cubature rules for (potentially) scattered data points \cite{huybrechs2009stable,glaubitz2020stable,glaubitz2020stableCF,glaubitz2020constructing}.
For nonlinear problems, sometimes also the total variation is considered; see  
\cite{harten1984class,cockburn1989tvb,cockburn1991runge,toro2013riemann,glaubitz2019shock} 
and references therein.

\begin{remark}
Definition \ref{def:energy_stable} is adapted to problem \eqref{eq:linear_ad} where only one boundary term is fixed. However, if an additional forcing function was considered on the right hand side of \eqref{eq:linear_ad}, we would need to include the maximum of this function in \eqref{es:energy_stable} in the same fashion as for $g$; see \cite{svard2014review} for more details. 
\end{remark}

In particular, strong energy stability ensures that the numerical solution is not unconditionally increasing. 
It is therefore one of the basic requirements a method should fulfill. 
In \S \ref{sec:RBF_FR} and \S \ref{sec:RBF_SAT}, we propose and investigate two new approaches to ensure strong energy stability for RBF methods.
\section{Preliminaries on Radial Basis Function Methods} 
\label{sec:RBF_methods} 

In this section, we collect all the necessary preliminaries regarding RBF methods. 
For more details, we recommend the monographs \cite{buhmann2003radial,wendland2004scattered,fasshauer2007meshfree,fornberg2015primer} as well as the reviews \cite{fornberg2015solving,iske2003radial}.
It should be stressed that in recent years fundamental advances have occurred, and that many of these are only covered in \cite{fornberg2015primer}.

\subsection{Method of Lines}
\label{sub:method-of-lines}

Many RBF methods for time-dependent PDEs build up on the \emph{method of lines} \cite{leveque2002finite}. In this approach, problem \eqref{eq:linear-adv} initially remains continuous in time and only a spatial discretization is considered. 
This results in a system of ODEs
\begin{equation}\label{eq:semidis-eq}
  \frac{\d}{\d t} \mathbf{u}_j = \operatorname{L}(\mathbf{u})_j, 
  \quad j=1,\dots,m,
\end{equation}
usually referred to as the \emph{semidiscrete equation}. 
Here, $\mathbf{u}$ is given by ${\mathbf{u} = (\mathbf{u}_1,\dots,\mathbf{u}_m)}$, where 
${\mathbf{u}_j = \mathbf{u}_j(t)}$ denotes the vector of coefficients of the spatial semidiscretization $(u_j)_N$ of the $j$th component ${u_j = u_j(t,\boldsymbol{x})}$. 
Furthermore, $\operatorname{L}$ represents the operator for the spatial semidiscretization of the right hand side of \eqref{eq:linear-adv}.
Once such a spatial semidiscretization has been defined, the semidiscrete equation \eqref{eq:semidis-eq} is evolved in time by some usual time integration method. 
A popular choice with favorable stability properties are strong stability preserving (SSP) Runge--Kutta (RK) methods \cite{shu1988total, gottlieb1998total, levy1998semidiscrete, gottlieb2001strong, ketcheson2008highly}.
For all numerical tests presented in this work, we used the explicit SSP-RK method of third order using three stages (SSPRK(3,3)), given as follows.

\begin{definition}[SSPRK(3,3)]\label{def:SSP}
Let $\mathbf{u}^n$ be the solution at time $t^n$.  The solution $\mathbf{u}^{n+1}$ at time $t^{n+1}$ is computed as
\begin{equation}\label{eq:SSPRK33}
\begin{aligned}
  \mathbf{u}^{(1)} & = \mathbf{u}^n + \Delta t \operatorname{L}\left( \mathbf{u}^n \right), \\ 
  \mathbf{u}^{(2)} & = \frac{3}{4} \mathbf{u}^n + \frac{1}{4} \mathbf{u}^{(1)} + \frac{1}{4} \Delta t 
\operatorname{L}\left( \mathbf{u}^{(1)} \right), \\ 
  \mathbf{u}^{n+1} & = \frac{1}{3} \mathbf{u}^n + \frac{2}{3} \mathbf{u}^{(2)} + \frac{2}{3} \Delta t 
\operatorname{L}\left( \mathbf{u}^{(2)} \right). 
\end{aligned}
\end{equation}
\end{definition}

The time step size $\Delta t$ in (\ref{eq:SSPRK33}) is computed as $\Delta t = \frac{Ch}{\lambda_{\text{max}}}$ with $C = 0.1$, where $\lambda_{\text{max}}$ denotes the largest characteristic velocity. 
Moreover, $h$ is the smallest distance between any two distinct centers. 
That is, ${h = \min_{i\neq j} \norm{\mathbf{x}_i - \mathbf{x}_j}_2}$. 
See \S \ref{sub:RBF-interpol} for more details.

For the general linear advection equation \eqref{eq:linear-adv}, the largest characteristic velocity $\lambda_{\text{max}}$ is given by the largest absolute eigenvalue among all matrices $A_1,\dots,A_d$.\footnote{Usually, these matrices are assumed to be diagonalizable. This yields solutions that can be decomposed into traveling waves, which speed and direction is respectively given by the eigenvalues and corresponding (normalized) eigenvector. See, for instance, \cite[Chapter 11.1]{evans2010partial} or \cite[Remark 2.2]{glaubitz2020shock}.}

\subsection{Radial Basis Function Interpolation}
\label{sub:RBF-interpol}

Let us now consider the approximation of a function $u: \Omega \to \R$ with $\Omega \subset \R^d$ by RBF interpolants. 
Given a set of $N$ distinct points $\mathbf{x}_n \in \Omega$, $n=1,\dots,N$, called \emph{centers} (or \emph{nodes}), the \emph{RBF interpolant of $u$ w.\,r.\,t.\ $\phi$} is given by 
\begin{equation}\label{eq:pure-RBF-interpol}
  u_N(\boldsymbol{x}) = \sum_{n=1}^N \alpha_n \phi \left( \, \norm{\boldsymbol{x}-\mathbf{x}_n}_2 \right),
\end{equation}
where $\phi:[0,\infty) \to \R$ only depends on the norm (radius) $r = \norm{\boldsymbol{x}-\mathbf{x}_n}_2$ and is therefore referred to as an \emph{RBF} (or \emph{kernel}). 
The coefficients $\alpha_n$, $n=1,\dots,N$, are then uniquely determined by the \emph{interpolation conditions} 
\begin{equation}\label{eq:interpol-cond}
  u_N(\mathbf{x}_n) = u(\mathbf{x}_n), \quad n=1,\dots,N.
\end{equation}
These yield a system of linear equations,  
\begin{equation}\label{eq:LS-pure-RBF-interpol}
  \underbrace{\begin{pmatrix} 
    \phi\left( \norm{\mathbf{x}_1-\mathbf{x}_1}_2 \right) & \dots & \phi\left( \norm{\mathbf{x}_1-\mathbf{x}_N}_2 \right) 
\\ 
    \vdots & & \vdots \\ 
    \phi\left( \norm{\mathbf{x}_N-\mathbf{x}_1}_2 \right) & \dots & \phi\left( \norm{\mathbf{x}_N-\mathbf{x}_N}_2 \right)
  \end{pmatrix}}_{=: \Phi}
  \underbrace{\begin{pmatrix} 
    \alpha_1 \\ \vdots \\ \alpha_N
  \end{pmatrix}}_{=: \boldsymbol{\alpha}}
  = 
  \underbrace{\begin{pmatrix} 
    u(x_1) \\ \vdots \\ u(x_N)
  \end{pmatrix}}_{=: \mathbf{u}},
\end{equation}
which can be solved for the vector of coefficients $\boldsymbol{\alpha} \in \R^N$ if the matrix $\Phi$ is invertible. Some popular examples of RBFs are listed in Table \ref{tab:RBFs} and even more can be found in the literature \cite{iske2003radial, buhmann2003radial, wendland2004scattered, fasshauer2007meshfree, fornberg2015solving}. 

\begin{table}[htb!]
  \centering 
  \renewcommand{\arraystretch}{1.3}
  \begin{tabular}{c|c|c|c}
    RBF & $\phi(r)$ & parameter & order \\ \hline 
    Gaussian & $\exp( -(\varepsilon r)^2)$ & $\varepsilon>0$ & 0 \\ 
    Multiquadrics & $\sqrt{(\varepsilon r)^2 + 1}$ & $\varepsilon>0$ & $1$ \\ 
    Polyharmonic splines & $r^{2k-1}$ & $k \in \N$ & $k$ \\ 
    & $r^{2k} \log r$ & $k \in \N$ & $k+1$ \\
  \end{tabular} 
  \caption{Some popular RBFs}
  \label{tab:RBFs}
\end{table}

In many situations the RBF interpolant \eqref{eq:pure-RBF-interpol} is desired to include polynomials up to a certain degree, together with matching constraints on the expansion coefficients. 
Let $\{p_i\}_{i=1}^Q$ be a basis of the space of (algebraic) polynomials of degree less than $m$, denoted by $\mathbb{P}_{m-1}(\R^d)$.\footnote{Note that $Q$ denotes the dimension of $\mathbb{P}_{m-1}(\R^d)$,
given by $Q = \binom{d+m-1}{d}$.} 
Then, including polynomials of degree less than $m$ into the RBF interpolant \eqref{eq:pure-RBF-interpol} results in 
\begin{equation}\label{eq:RBF-interpol}
  u_N(\boldsymbol{x}) 
    = \sum_{n=1}^N \alpha_n \phi \left( \, \norm{\boldsymbol{x}-\mathbf{x}_n}_2 \right) 
    + \sum_{i=1}^Q \beta_i p_i(\boldsymbol{x}).
\end{equation}
Moreover, the matching constraints  
\begin{equation}\label{eq:matching-cond}
  \sum_{n=1}^N \alpha_n p_i(\mathbf{x}_n) = 0, \quad i = 1,\dots,Q,
\end{equation}
have to be satisfied. 
These are supposed to ensure that \eqref{eq:RBF-interpol} is still uniquely determined by the interpolation conditions \eqref{eq:interpol-cond}. 
Let us denote 
\begin{equation}\label{eq:P} 
  P = 
  \begin{pmatrix} 
    p_1\left( \mathbf{x}_1 \right) & \dots & p_1\left( \mathbf{x}_N \right) \\ 
    \vdots & & \vdots \\ 
    p_Q\left( \mathbf{x}_1 \right) & \dots & p_Q\left( \mathbf{x}_N \right)
  \end{pmatrix}, \quad 
  \boldsymbol{\beta} = 
  \begin{pmatrix} 
    \beta_1 \\ 
    \vdots \\ 
    \beta_Q
  \end{pmatrix}.
\end{equation}
Then, \eqref{eq:matching-cond} can be rewritten as $P \boldsymbol{\alpha} = \mathbf{0}$ and the whole set of the expansion coefficients ${\alpha_1,\dots,\alpha_N}$ and ${\beta_1,\dots,\beta_Q}$ in \eqref{eq:RBF-interpol} can be recovered from the system of linear equations 
\begin{equation}\label{eq:LS-RBF-interpol}
  \underbrace{\begin{pmatrix} 
    \Phi & P^T \\ 
    P & 0
  \end{pmatrix}}_{=: V}
  \begin{pmatrix} 
    \boldsymbol{\alpha} \\ \boldsymbol{\beta}
  \end{pmatrix}
  = 
  \begin{pmatrix} 
    \mathbf{u} \\ \mathbf{0}
  \end{pmatrix},
\end{equation}
where $V$ is referred to as the \emph{Vandermonde matrix}.
There are various good reasons for including polynomials in RBF interpolants 
\cite{schaback1995error,buhmann2000radial,flyer2016role,flyer2016enhancing}:

\begin{enumerate}
  \item 
  Polynomial terms can ensure that \eqref{eq:LS-RBF-interpol} is uniquely solvable when conditionally positive definite 
RBFs are used, assuming that the set of centers $X := \{\mathbf{x}_n\}_{n=1}^N$ is 
$\mathbb{P}_{m-1}(\R^d)$-unisolvent.\footnote{We say that an RBF $\varphi$ is \emph{conditionally positive definite 
of order $m$} on $\R^d$ if $\boldsymbol{\alpha}^T \Phi \boldsymbol{\alpha} > 0$ holds for every set of distinct 
centers $X \subset \R^d$ and all $\boldsymbol{\alpha} \in \R^N\setminus\{\mathbf{0}\}$ that satisfy 
\eqref{eq:matching-cond}.} 
  See, for instance, \cite[Chapter 7]{fasshauer2007meshfree} or \cite[Chapter 3.1]{glaubitz2020shock}.
  
  \item 
  Numerical tests demonstrate that including a constant improves the accuracy of derivative approximations. 
  In particular, adding a constant avoids oscillatory representations of constant functions.
  
  \item 
  Including polynomial terms of low order can also improve the accuracy of RBF interpolants near domain boundaries due to regularizing the far-field growth of RBF interpolants \cite{fornberg2002observations}. 
\end{enumerate}

Finally, we note that the set of all RBF interpolants \eqref{eq:RBF-interpol} forms an $N$-dimensional linear space, denoted by $V_{N,m}$. 
This space is spanned, for instance, by the basis elements 
\begin{equation}\label{eq:nodal_basis}
  \psi_k(\boldsymbol{x}) = \sum_{n=1}^N \alpha_n^{(k)} \phi( \norm{ \boldsymbol{x} - \mathbf{x}_n }_2 ) 
    + \sum_{i=1}^Q \beta_i^{(k)} p_i(\boldsymbol{x}), 
    \quad k=1,\dots,N,
\end{equation} 
which (expansion coefficients) are uniquely determined by 
\begin{equation}\label{eq:cond1_psi}
  \psi_k(\mathbf{x}_n) = \delta_{kn} := 
  \begin{cases} 
    1 & \text{if } k=n, \\ 
    0 & \text{otherwise}, 
  \end{cases} 
  \quad n=1,\dots,N,
\end{equation}
and the matching conditions \eqref{eq:matching-cond}. 
The basis $\{ \psi_k \}_{k=1}^N$ can be considered as a nodal basis and comes with the advantage of providing a 
representation of the RBF interpolant in which the expansion coefficients are simply given by the known nodal values of 
$u$: 
\begin{equation}
  u_N(\boldsymbol{x}) = \sum_{k=1}^N u(\mathbf{x}_k) \psi_k(\boldsymbol{x})
\end{equation} 
This representation will be convenient for the implementation of the latter proposed RBF methods.

\subsection{Usual Radial Basis Function Methods} 

For sake of simplicity, we only outline the procedure for usual RBF methods for a scalar linear advection equation of the form 
\begin{equation} 
\begin{aligned}
  \partial_t u + \sum_{r=1}^d a_r \partial_{x_r} u & = 0, \quad && \boldsymbol{x} \in \Omega \subset \R^d, \ t>0, \\ 
  u & = g, \quad && \boldsymbol{x} \in \partial \Omega, \ t>0,
\end{aligned}
\end{equation} 
ignoring the IC for the moment.
Yet, the extension to systems is straightforward and achieved by applying the procedure to every component of 
the system.
Combining the method of lines from \S \ref{sub:method-of-lines} with the above discussed RBF interpolants, the idea 
behind (collocation) RBF methods is to define the spatial discretization $\mathbf{u}$ as the values of $u$ at a set of grid points and the operator $\operatorname{L}(\mathbf{u})$ by using the spatial 
derivative of the RBF interpolant $u_N$. 
Let $X = \{ \mathbf{x}_n \}_{n=1}^N$ be the set of grid points in the computational domain $\Omega \subset \R^d$ and let us assume that some of these grid points lie at the boundary $\partial \Omega$. 
In usual RBF methods, the BC is then enforced by replacing the original interpolation conditions 
\eqref{eq:interpol-cond} by 
\begin{equation}\label{eq:interpol+BC}
  u_N(\mathbf{x}_n) = 
  \begin{cases}
    u(\mathbf{x}_n) & \text{if } \mathbf{x}_n \not\in \partial \Omega, \\ 
    g(\mathbf{x}_n) & \text{if } \mathbf{x}_n \in \partial \Omega,
  \end{cases} 
  \quad n=1,\dots,N.
\end{equation}
This is what is commonly referred to as a \emph{strong enforcement of BCs}. 
Finally,  the \emph{usual RBF method} can be summarized in three simple steps: 

\begin{enumerate}
  \item Determine the RBF interpolant $u_N \in V_{N,m}$ satisfying \eqref{eq:interpol+BC}. 
  
  \item Define $\operatorname{L}(\mathbf{u})$ in the semidiscrete equation \eqref{eq:semidis-eq} by  
  \begin{equation}
    \operatorname{L}(\mathbf{u}) 
      = \left( - \sum_{r=1}^d a_r \left( \partial_{x_r} u_N \right) (\mathbf{x}_n) \right)_{n=1}^N.
  \end{equation}
  
  \item Use some time integration method, e.\,g.\ SSPRK(3,3), to evolve \eqref{eq:semidis-eq} in time. 
\end{enumerate}

Even though a strong enforcement of BCs, as described in \eqref{eq:interpol+BC}, yields a pleasantly simple scheme, it 
is known that this approach can result in RBF methods that are unstable in time; 
see \cite{platte2006eigenvalue,glaubitz2019stability}. 
In fact, this can already be observed for a simple scalar linear advection equation in one spatial dimension and is 
demonstrated, for instance, 
in \cite{platte2006eigenvalue,glaubitz2019stability} as well as in \S \ref{sec:num}.
\section{Linear Stable RBF Methods I: Flux Reconstruction} 
\label{sec:RBF_FR} 

In this section, we propose and investigate the first approach to construct RBF methods which are strongly energy stable for linear advection problems. 
Henceforth, we will refer to such RBF methods simply as \emph{linear stable RBF methods}. 

\subsection{Basic Idea}

For sake of simplicity, we start by considering the one-dimensional scalar linear advection equation 
\begin{equation}\label{eq:FR_linear-adv}
\begin{aligned}
	\partial_t u + \partial_x \left( a u \right) & = 0, \quad && x_L < x < x_R, \ t > 0, \\ 
	u(t,x_L) & = g(t), \quad && t > 0, 
\end{aligned}
\end{equation}
on $\Omega = [x_L,x_R]$ with constant velocity $a > 0$ (the case $a<0$ can be treated analogously).  
In the context of (nonlinear) hyperbolic conservation laws, $f(u) = au$ is usually referred to as the \emph{flux function}. 
Let $X = \{x_n\}_{n=1}^N$ be a set of distinct centers in $\Omega$, $m \in \N$, and let $u_N$ and $f_N$ respectively denote the corresponding RBF interpolant of $u$ and $f(u)$ including polynomials of degree less than $m$; see \eqref{eq:RBF-interpol}. 
Note that $X$ does not necessarily include the boundary points $x_L$ and $x_R$. 
The idea behind this first approach to construct linear stable RBF methods is to introduce \emph{correction functions} ${c_L,c_R: \Omega \to \R}$ and to consider the reconstructed flux function 
\begin{equation}\label{eq:cor-flux}
	f_N^{\text{rec}}(x) = f_N(x) + c_L(x) \left[ \fnum_L - f_N(x_L) \right] + c_R(x) \left[ \fnum_R - f_N(x_R) \right].
\end{equation} 
Here, $\fnum_L$ and $\fnum_R$ are the values of a numerical flux,
\begin{equation} 
\begin{aligned}
	\fnum_L & = \fnum\left( g_L(t), u_N(x_L) \right), \\ 
	\fnum_R & = \fnum\left( u_N(x_R), g_R(t) \right), 
\end{aligned}
\end{equation}
which is chosen to be consistent ($\fnum(u,u) = f(u)$), Lipschitz continuous, and monotone. 
Examples of commonly used numerical fluxes can be found in \cite{cockburn1989tvb} and \cite{toro2013riemann}. 
For the linear advection equation \eqref{eq:FR_linear-adv} a usual choice is the \emph{upwind flux} 
\begin{equation}\label{eq:upwind-flux}
	\fnum(u,v) = 
	\begin{cases} 
		a u \quad & \text{ if } a \geq 0, \\ 
		a v  \quad & \text{ if } a < 0.
	\end{cases}
\end{equation}
Substituting the solution $u$ by $u_N$ and the flux function  $au$ by \eqref{eq:cor-flux}, at a fixed time $t$, we end up with the spatial semidiscretization 
\begin{equation}\label{eq:FR-RBF}
  \partial_t u_N = - f_N' - c_L' \left[ \fnum_L - f_N(x_L) \right] - c_R' \left[ \fnum_R - f_N(x_R) \right].
\end{equation}
Henceforth, we call this semidiscretization and the resulting numerical scheme the \emph{flux reconstruction
RBF (FR-RBF) method}. 
It should be noted that the idea behind the FR-RBF method stems from FR schemes \cite{huynh2007flux} which were first introduced by Huynh in 2007 in the context of polynomial FE methods. 
In what follows, we show how certain conditions for the correction functions $c_L,c_R$ yield provable conservative and linear stable RBF methods. 
Furthermore, we will address the construction of these correction functions.

\subsection{Conservation and Linear Stability}

A fundamental property of linear advection equations---and general hyperbolic conservation laws---is conservation. 
That is, the rate of change of the total amount of the conserved variable $u$ should be equal to the flux across the domain boundaries. 
Any numerical method should satisfy this property on a discrete level; see \cite{leveque1992numerical} and references therein. 
For the above proposed FR-RBF method, indeed, this can be ensured by requiring the correction functions $c_L,c_R$ to satisfy 
\begin{equation}\label{cons:conservation}
	c_L(x_R) = c_R(x_L) = 0, \quad c_R(x_R) = c_L(x_L) = 1.
\end{equation} 
This is summarized and proven in the following lemma.   

\begin{lemma}[Conservation of the FR-RBF method]\label{lem:FR-cons}
	Given is the FR-RBF method \eqref{eq:FR-RBF} and correction functions satisfying \eqref{cons:conservation}. 
	Then, the FR-RBF method is conservative; that is,
	\begin{equation}
		\frac{\d}{\d t} \int_\Omega u_N \intd x  = - \left( \fnum_R - \fnum_L \right) 
	\end{equation} 
	holds for all times $t$. 
\end{lemma}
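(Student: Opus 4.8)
The plan is to integrate the semidiscrete evolution equation \eqref{eq:FR-RBF} over the computational domain $\Omega = [x_L, x_R]$ and to collapse the right-hand side using the fundamental theorem of calculus together with the endpoint values \eqref{cons:conservation} prescribed for the correction functions. Since the RBF interpolant $f_N$ and the correction functions $c_L, c_R$ are (by construction) at least continuously differentiable in $x$, and $u_N$ depends smoothly on $t$, all of the integrals below exist and the time derivative commutes with the spatial integral.

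First I would integrate \eqref{eq:FR-RBF} in $x$ over $[x_L, x_R]$. The two bracketed factors $\fnum_L - f_N(x_L)$ and $\fnum_R - f_N(x_R)$ do not depend on $x$, so they pull out of their respective integrals, leaving
\begin{equation*}
  \frac{\d}{\d t} \int_{x_L}^{x_R} u_N \intd x
  = - \int_{x_L}^{x_R} f_N' \intd x
    - \bigl[ \fnum_L - f_N(x_L) \bigr] \int_{x_L}^{x_R} c_L' \intd x
    - \bigl[ \fnum_R - f_N(x_R) \bigr] \int_{x_L}^{x_R} c_R' \intd x .
\end{equation*}
Next I would evaluate the three remaining integrals: the first equals $f_N(x_R) - f_N(x_L)$, while \eqref{cons:conservation} yields $\int_{x_L}^{x_R} c_L' \intd x = c_L(x_R) - c_L(x_L) = -1$ and $\int_{x_L}^{x_R} c_R' \intd x = c_R(x_R) - c_R(x_L) = 1$.

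Substituting these values, the interior flux contributions $f_N(x_L)$ and $f_N(x_R)$ cancel and one is left with
\begin{equation*}
  \frac{\d}{\d t} \int_{x_L}^{x_R} u_N \intd x
  = -\bigl( f_N(x_R) - f_N(x_L) \bigr) + \bigl( \fnum_L - f_N(x_L) \bigr) - \bigl( \fnum_R - f_N(x_R) \bigr)
  = -\bigl( \fnum_R - \fnum_L \bigr),
\end{equation*}
which is exactly the asserted conservation identity. There is no genuine obstacle here: the argument is essentially a one-line computation once \eqref{eq:FR-RBF} is integrated. The only point requiring care is the sign bookkeeping stemming from the orientation of $\partial\Omega$ and from the normalization \eqref{cons:conservation}; keeping those signs consistent is what makes the three telescoping contributions combine into precisely $\fnum_L - \fnum_R$.
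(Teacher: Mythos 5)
Your proof is correct and follows exactly the same route as the paper: integrate \eqref{eq:FR-RBF} over $\Omega$, pull the $x$-independent brackets out of the integrals, apply the fundamental theorem of calculus, and use the endpoint values \eqref{cons:conservation} so that the $f_N(x_L)$ and $f_N(x_R)$ terms telescope away, leaving $\fnum_L - \fnum_R$. The sign bookkeeping matches the paper's computation as well.
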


\begin{proof}
	Integrating \eqref{eq:FR-RBF} over $\Omega$, we get 
	\begin{equation}
	\begin{aligned}
  		\frac{\d}{\d t} \int_\Omega u_N \intd x 
    			= & - \int_\Omega f_N' \intd x 
      			- \left[ \fnum_L - f_N(x_L) \right] \int_\Omega c_L' \intd x 
      			- \left[ \fnum_R - f_N(x_R) \right] \int_\Omega c_R' \intd x \\ 
    			= & - \left[ f_N(x_R) - f_N(x_L) \right] 
      			- \left[ \fnum_L - f_N(x_L) \right] \left[ c_L(x_R) - c_L(x_L) \right] \\ 
      		& - \left[ \fnum_R - f_N(x_R) \right] \left[ c_R(x_R) - c_R(x_L) \right].
	\end{aligned}
	\end{equation} 
	Since the correction functions satisfy \eqref{cons:conservation}, this yields 
	\begin{equation} 
	\begin{aligned}
 		\frac{\d}{\d t} \int_\Omega u_N \intd x  
    			& = - \left[ f_N(x_R) - f_N(x_L) \right] 
				+ \left[ \fnum_L - f_N(x_L) \right] 
      			- \left[ \fnum_R - f_N(x_R) \right] \\ 
    			& = \fnum_L - \fnum_{R},
	\end{aligned}
	\end{equation}
	and therefore the assertion. 
\end{proof}

In a similar manner, linear stability can be ensured for the FR-RBF method by a clever choice of the correction functions $c_L,c_R$. 
This time, we need $c_L,c_R$ to satisfy 
\begin{equation}\label{cons:stability}
  	\int_\Omega v \, c_L' \intd x = - v(x_L), \quad 
  	\int_\Omega v \, c_R' \intd x = v(x_R)
\end{equation} 
for all $v \in V_{N,m}$. 
Remember that $V_{N,m}$ denotes the space of all RBF interpolants \eqref{eq:RBF-interpol}, from which also the approximations $u_N$ and $f_N$ are chosen. 

\begin{theorem}[Linear stability of the FR-RBF method]\label{thm:FR-stab}
	Given is the FR-RBF method \eqref{eq:FR-RBF} with correction functions satisfying \eqref{cons:stability} and the upwind flux $\fnum$ as in \eqref{eq:upwind-flux}. 
	Then, the FR-RBF method is strongly energy stable for the linear advection equation \eqref{eq:FR_linear-adv}; that is, \eqref{es:energy_stable} holds. 
\end{theorem}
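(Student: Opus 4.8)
The plan is to run the standard energy method: multiply the semidiscrete equation \eqref{eq:FR-RBF} by $u_N$, integrate over $\Omega=[x_L,x_R]$, and show that the resulting expression for $\frac{\d}{\d t}\norm{u_N}^2$ is bounded by a term involving only the boundary data $g$. First I would write
\begin{equation*}
  \frac12 \frac{\d}{\d t} \norm{u_N}^2
  = \int_\Omega u_N \, \partial_t u_N \intd x
  = - \int_\Omega u_N f_N' \intd x
    - \left[ \fnum_L - f_N(x_L) \right] \int_\Omega u_N c_L' \intd x
    - \left[ \fnum_R - f_N(x_R) \right] \int_\Omega u_N c_R' \intd x .
\end{equation*}
Since $u_N \in V_{N,m}$, the stability conditions \eqref{cons:stability} apply with $v = u_N$, so $\int_\Omega u_N c_L' \intd x = -u_N(x_L)$ and $\int_\Omega u_N c_R' \intd x = u_N(x_R)$. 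This converts the correction-function integrals into pure boundary point-evaluations and is the crux of why these particular conditions were imposed.

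Next I would handle the volume term $\int_\Omega u_N f_N'$. Here I would use that $f(u) = au$ is linear, so the natural move is $f_N = a u_N$ (for the linear flux the RBF interpolant of $au$ is $a$ times the interpolant of $u$), giving $\int_\Omega u_N f_N' = a \int_\Omega u_N u_N' = \frac{a}{2}\left[ u_N(x_R)^2 - u_N(x_L)^2 \right]$ by integration by parts / the fundamental theorem of calculus. Combining, and substituting the upwind flux \eqref{eq:upwind-flux} with $a>0$, namely $\fnum_L = a g(t)$ and $\fnum_R = a u_N(x_R)$, the right boundary contribution becomes $-\left[ a u_N(x_R) - a u_N(x_R)\right] u_N(x_R) = 0$, and one is left with
\begin{equation*}
  \frac12 \frac{\d}{\d t}\norm{u_N}^2
  = -\frac{a}{2} u_N(x_R)^2 + \frac{a}{2} u_N(x_L)^2
    + \left[ a g(t) - a u_N(x_L) \right] u_N(x_L).
\end{equation*}
Collecting the $x_L$ terms gives $\frac{a}{2} u_N(x_L)^2 - a u_N(x_L)^2 + a g(t) u_N(x_L) = -\frac{a}{2} u_N(x_L)^2 + a g(t) u_N(x_L)$, which I would complete the square / estimate via Young's inequality ($a g u_N(x_L) \le \frac{a}{2} u_N(x_L)^2 + \frac{a}{2} g^2$) to obtain
\begin{equation*}
  \frac{\d}{\d t}\norm{u_N}^2 \le -a\, u_N(x_R)^2 + a\, g(t)^2 \le a\, g(t)^2 .
\end{equation*}
Integrating in time yields $\norm{u_N(t)}^2 \le \norm{(u_{\text{init}})_N}^2 + a t \max_{s\in[0,t]} |g(s)|^2$, which is exactly \eqref{es:energy_stable} with $K(t) = \max\{1, at\}$, bounded for finite $t$ and independent of $u_{\text{init}}$, $g$, and $N$.

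The step I expect to be the main obstacle — or at least the one needing the most care — is the treatment of the volume term and the implicit assumption that $f_N = a u_N$ exactly (so that $\int_\Omega u_N f_N'$ telescopes cleanly). If instead $f_N$ is taken to be an independent interpolant of $au$, the interpolants of $u$ and $au$ coincide when they share centers and polynomial space, so this should be fine, but it is worth stating explicitly; otherwise an additional aliasing-type term would appear. A secondary point is verifying that conditions \eqref{cons:stability} are actually satisfiable by some $c_L, c_R$ (and compatible with the conservation conditions \eqref{cons:conservation}), but since the theorem only asserts stability \emph{given} such correction functions, that construction can be deferred to the subsequent discussion. Everything else is routine: linearity, integration by parts, the sign of $a$ in the upwind flux, and Young's inequality.
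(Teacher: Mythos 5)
Your proposal is correct and follows essentially the same route as the paper: energy method, invoking \eqref{cons:stability} with $v=u_N$ to turn the correction integrals into boundary evaluations, using $f_N = a u_N$ and the upwind flux to kill the outflow term, and then bounding the remaining $x_L$ terms (your Young's inequality step is the same estimate as the paper's completion of the square $-a[u_N(x_L)-g]^2 + ag^2 \le ag^2$), ending with $K(t)=\max\{1,at\}$. Your side remark that $f_N = a u_N$ needs the interpolation operator to commute with the constant $a$ is a fair point of care, and the paper simply asserts this identity for the linear flux.
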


\begin{proof}
	In case of the linear advection equation \eqref{eq:FR_linear-adv}, we have $f_N = a u_N$. 
	Hence, we can note that for the $L^2$-norm 
	\begin{equation}
	\begin{aligned}
		\frac{\d}{\d t} \norm{u_N(t)}^2 
			= & 2 \int_\Omega u_N \left( \partial_t u_N \right) \intd x \\ 
			= & - 2a \int_\Omega u_N u_N' \intd x 
      			- 2 \left[ \fnum_L - a u_N(x_L) \right] \int_\Omega u_N c_L' \intd x \\
      		& - 2 \left[ \fnum_R - a u_N(x_R) \right] \int_\Omega u_N c_R' \intd x
	\end{aligned}
	\end{equation} 
	holds. 
	Thus, when $c_L$ and $c_R$ satisfy the constraint \eqref{cons:stability}, we get 
	\begin{equation}
	\begin{aligned}
		\frac{\d}{\d t} \norm{u_N(t)}^2 
			= & - a \left[ u_N(x_R)^2 - u_N(x_L)^2 \right] 
				+ 2 \left[ \fnum_L - a u_N(x_L) \right] u_N(x_L) \\
			& - 2 \left[ \fnum_R - a u_N(x_R) \right] u_N(x_R).
	\end{aligned}
	\end{equation} 
	Furthermore, note that $\fnum_L$ and $\fnum_R$ are given by the upwind flux \eqref{eq:upwind-flux} as
	\begin{equation} 
	\begin{aligned}
		\fnum_L & = \fnum\left( g(t), u_N(x_L) \right) = a g(t), \\
		\fnum_R & = \fnum\left( u_N(x_R), u_N(x_R) \right) = a u_N(x_R),
	\end{aligned}
	\end{equation} 
	since the BC only applies to the left boundary and there is no BC at the right boundary. 
	Finally, this yields 
	\begin{equation}\label{eq:stab-FR-proof}
	\begin{aligned}
		\frac{\d}{\d t} \norm{u_N(t)}^2 
			& \leq - a u_N(x_L)^2 + 2 a u_N(x_L) g(t) \\
			& = - a \left[ u_N(x_L) - g(t) \right]^2 + a  g(t)^2 \\ 
			& \leq a g(t)^2
	\end{aligned} 
	\end{equation} 
	and therefore 
	\begin{equation}
	\begin{aligned}
		\norm{u_N(t)}^2 
			& \leq \norm{\left(u_{\text{init}}\right)_N}^2 + a \int_0^t g(s)^2 \intd s \\
			& \leq K(t) \left( \norm{\left(u_{\text{init}}\right)_N}^2 + \max_{s \in [0,t]} |g(s)|^2 \right)
	\end{aligned} 
	\end{equation} 
	with $K(t) = \max\{ 1, a t\}$.
\end{proof}

\subsection{On the Construction of $c_L$ and $c_R$} 

Following Lemma \ref{lem:FR-cons} and Theorem \ref{thm:FR-stab}, the proposed FR-RBF method was proven  to be conservative and linear stable if $c_L, c_R$ satisfy the conditions \eqref{cons:conservation} and \eqref{cons:stability}. 
Here, we show how such correction functions can be constructed. 
Thereby, we only address the construction of $c_L$ and note that $c_R$ can be constructed completely analogously. 
Collecting the conditions \eqref{cons:conservation} and \eqref{cons:stability}, $c_L$ should satisfy 
\begin{align}
  c_L(x_L) & = 1, \quad c_L(x_R) = 0, \label{eq:cond1} \\ 
  \int_\Omega v \, c_L' \intd x & = - \varphi(x_L) \quad \forall v \in V_{N,m}. \label{eq:cond2}
\end{align} 
In \S \ref{sub:RBF-interpol} we have already noted that $V_{N,m}$ is spanned by the nodal basis $\{ \psi_k \}_{k=1}^N$ given in \eqref{eq:nodal_basis}. 
Hence, condition \eqref{eq:cond2} is equivalent to 
\begin{equation}\label{eq:cond2-2}
	\int_\Omega \psi_k \, c_L' \intd x = - \psi_k(x_L), \quad k=1,\dots,N.
\end{equation} 
Together, the conditions \eqref{eq:cond1} and \eqref{eq:cond2-2} therefore impose $N+2$ conditions on the correction function $c_L$. 
It should be stressed that, so far, we have not restricted $c_L$ to belong to any specific function space. 
For example in FR methods, which are the motivation for the FR-RBF method, $c_L$ lies in certain polynomial spaces. 
Regarding the FR-RBF method, however, it seems natural to let $c_L$ be an RBF interpolant.\footnote{Of course, other choices are possible as well. Yet, in this work, we will only consider correction functions which are RBF interpolants themselves.} 
Thus, we follow the approach of $c_L$ being an RBF interpolant using $N+2$ centers in $\Omega$ and including polynomials of degree less than $m$; that is, $c_L \in V_{N+2,m}$. 
Theoretically, the corresponding set of centers $\tilde{X} = \{ \tilde{x}_n \}_{n=0}^{N+1}$ can be chosen completely independent of the original set of centers $X$. 
In our numerical tests we always chose $X$ and $\tilde{X}$ respectively to be the set of $N$ and $N+2$ equidistant points in $[x_L,x_R]$ including the boundary points. 
Next, letting $\{ \tilde{\psi}_j \}_{j=0}^{N+1}$ be a basis of $V_{N+2,m}$, $c_L$ can be represented as 
\begin{equation}\label{eq:rep-c}
	c_L(x) = \sum_{j=0}^{N+1} \gamma_j \tilde{\psi}_j(x). 
\end{equation} 
Then, the conditions \eqref{eq:cond1} and \eqref{eq:cond2-2} become 
\begin{align} 
	\sum_{j=0}^{N+1} \gamma_j \tilde{\psi}_j(x_L) = & 1, \quad 
	\sum_{j=0}^{N+1} \gamma_j \tilde{\psi}_j(x_R) = 0, \\ 
  	\sum_{j=0}^{N+1} \gamma_j \scp{\psi_k}{\tilde{\psi}_j'} = & - \psi_k(x_L), \quad k=1,\dots,N,
\end{align} 
where 
\begin{equation} 
	\scp{\psi_k}{\tilde{\psi}_j'} = \int_\Omega \psi_k \, \tilde{\psi}_j' \intd x.
\end{equation}
Hence, the coefficients $\gamma_j$ can be determined by solving the systems of linear equations 
\begin{equation}\label{eq:LSE}
 	\underbrace{
  	\begin{pmatrix} 
    		\tilde{\psi}_0(x_L) & \dots & \tilde{\psi}_{N+1}(x_L) \\ 
    		\tilde{\psi}_0(x_R) & \dots & \tilde{\psi}_{N+1}(x_R) \\ 
    		\scp{\psi_1}{\tilde{\psi}_0'} & \dots & \scp{\psi_1}{\tilde{\psi}_{N+1}'} \\ 
    		\vdots & & \vdots \\ 
    		\scp{\psi_N}{\tilde{\psi}_0'} & \dots & \scp{\psi_N}{\tilde{\psi}_{N+1}'}
  	\end{pmatrix}
  	}_{=: A}
  	\begin{pmatrix} 
    		\gamma_ 0 \\ \gamma_1 \\ \gamma_2 \\ \vdots \\ \gamma_{N+1}
  	\end{pmatrix}
  	= 
  	\begin{pmatrix} 
    		1 \\ 0 \\ - \psi_1(x_L) \\ \vdots \\ - \psi_N(x_L)
  	\end{pmatrix}.
\end{equation} 
Substituting these coefficients into \eqref{eq:rep-c} we can also recover the correction function $c_L$.

\subsection{Advantages and Pitfalls} 
\label{sub:pitfalls}

An obvious advantage of the FR-RBF method discussed above is that it is provably conservative and linear stable. 
For usual RBF methods this is not the case, which is demonstrated in \S \ref{sec:num}. 
Hence, unlike usual RBF methods, the FR-RBF method is ensured to be stable and produce physically reasonable solutions. 
A disadvantage of this method lies in the construction of the correction functions $c_L$ and $c_R$, however. 
In particular, these can only be recovered from the system of linear equations \eqref{eq:LSE} if the matrix $A$ is invertible (nonsingular). 
Note that $A$ being invertible depends on the kernel $\varphi$, the sets $X$ and $\tilde{X}$, and the polynomial degree $m-1$. 
Unfortunately, we do not see how $A$ being invertible can be ensured. 
Even though we observe $A$ to be invertible in all our numerical tests, we also note that it often has a fairly high condition number. 
This is reported in Table \ref{tab:cond} for the cubic and quintic kernel on an increasing set of equidistant points. 

\begin{table}[htb]
  \renewcommand{\arraystretch}{1.3}
  \centering 
  \begin{adjustbox}{width=.95\textwidth}
  \begin{tabular}{l c c c c c c c c c c}
     & & \multicolumn{4}{c}{cubic kernel $\varphi$} & & \multicolumn{4}{c}{quintic kernel $\varphi$} \\ \hline 
     & & \multicolumn{9}{c}{$N$} \\ \hline
     & & $10$ & $20$ & $40$ & $80$ & & $10$ & $20$ & $40$ & $80$ \\ \hline 
    $A$ regular? & & \cmark & \cmark & \cmark & \cmark  
	         & & \cmark & \cmark & \cmark & \cmark \\
    $\cond(A)$ 	 & & 8.3E+11 & 4.0E+10 & 5.4E+12 & 5.5E+11  
		 & & 3.8E+10 & 2.6E+11 & 3.3E+11 & 2.2E+8 \\ \hline
  \end{tabular}
  \end{adjustbox}
  \caption{Condition numbers of matrix $A$ in \eqref{eq:LSE} for the cubic and quintic kernel.}
  \label{tab:cond}
\end{table}

We can note from Table \ref{tab:cond} that the proposed construction of $c_L$ and $c_R$ can be expected to result in a numerically unstable---or at least only pseudo stable---method. 
This is also demonstrated in the later numerical tests. 
 
\section{Linear Stable RBF Methods II: Simultaneous Approximation Terms} 
\label{sec:RBF_SAT} 

In the previous section, the FR approach was proposed to construct linear stable RBF methods. 
Unfortunately, we also noted some problems for this approach related to numerical instabilities in the construction of the correction functions $c_L,c_R$.  
In this section, we therefore follow a different approach, considering SATs in combination with a RBF methods. 
By now, SATs are an often used tool in the context of FD methods. 
Together with SBP operators they are applied to retrieve energy stable (SBP-SAT) schemes; see \cite{fernandez2014review,svard2014review} and references therein. 
Recently, the SAT approach has also been combined with continuous FE methods; see 
\cite{abgrall2019analysis,abgrall2019analysis_2,hicken2020entropy}. 
Here, we propose to use SATs in combination with RBF methods for determining boundary correction operators that yield provable linear stability. 
To the best of our knowledge, this is the first time SATs are investigated in the context of RBF methods.

\subsection{Basic Idea in One Dimension}

The basic idea of the SAT approach is to weakly implement the BCs in the spatial semidiscretization. 
For a scalar equation in one dimension, this yields the spatial semidiscretization 
\begin{equation}\label{semi_RBF_SAT}
	\partial_t u_N = - f_N' + SAT_L + SAT_R,
\end{equation}
where $SAT_L$ and $SAT_R$ represent the SATs.
The SATs are defined on the inflow part of the boundary through
\begin{equation}\label{eq:SAT}
\begin{aligned}
  SAT_L & = \tau_L a^+ \delta_{x_L} (u_N-g), \\
  SAT_R & =  \tau_R a^- \delta_{x_R} (u_N-g),
\end{aligned}
\end{equation} 
where $a^+=\max\{a,0\}$ and $a^-=\min\{a,0\}$. 
The terms $\tau_L$  and $\tau_R$ denote the boundary operators, while $\delta$ is the Dirac delta. 
That is, the generalized function with the property that 
\begin{equation}
	\int_\Omega v(x) \delta_{x_L}(x) \intd x = v(x_L). 
\end{equation} 
The boundary operators can be seen as some penalty terms and the basic idea of the SAT approach is to weakly impose the BCs. 
Regarding linear stability of \eqref{semi_RBF_SAT}, let us consider the linear advection equation \eqref{eq:FR_linear-adv} with constant velocity $a>0$.  
An inflow condition is set on the left boundary while an outflow is prescribed on the right boundary.
Hence, the only important boundary term in the semidiscretization \eqref{semi_RBF_SAT} is $SAT_L$. 
In this case, requiring $\tau_L < -\frac{1}{2}$ already suffices to ensure linear stability.

\begin{lemma}[Linear stability of the SAT-RBF method]\label{thm:SAT-stab} 
	Given is the SAT-RBF method \eqref{semi_RBF_SAT} with SATs \eqref{eq:SAT} and $\tau_L < -\frac{1}{2}$. 
	Then, the SAT-RBF method is strongly energy stable for the linear advection equation \eqref{eq:FR_linear-adv}; that is, \eqref{es:energy_stable} holds.
\end{lemma}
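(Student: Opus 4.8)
The plan is to mimic the energy argument used in the proof of Theorem~\ref{thm:FR-stab}. Since we treat the linear advection equation \eqref{eq:FR_linear-adv}, we have $f_N = a u_N$, and because $a>0$ we have $a^+=a$ and $a^-=0$, so that $SAT_R$ vanishes identically. Hence the semidiscretization \eqref{semi_RBF_SAT} reduces to $\partial_t u_N = -a u_N' + \tau_L a\, \delta_{x_L}(u_N-g)$, and only the left (inflow) boundary contributes.

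First I would differentiate the $L^2$-energy:
\[
\frac{\d}{\d t}\norm{u_N(t)}^2 = 2\int_\Omega u_N\,(\partial_t u_N)\intd x
= -2a\int_\Omega u_N u_N'\intd x + 2\tau_L a\int_\Omega u_N\,\delta_{x_L}(u_N-g)\intd x .
\]
For the first term I use integration by parts, $2\int_\Omega u_N u_N'\intd x = u_N(x_R)^2 - u_N(x_L)^2$, which is legitimate because RBF interpolants are smooth. For the second term I invoke the sifting property of the Dirac delta, $\int_\Omega u_N\,\delta_{x_L}(u_N-g)\intd x = u_N(x_L)\bigl(u_N(x_L)-g(t)\bigr)$, which makes sense since $u_N\in V_{N,m}$ is continuous. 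Collecting terms yields
\[
\frac{\d}{\d t}\norm{u_N(t)}^2 = -a\,u_N(x_R)^2 + a\,u_N(x_L)^2 + 2\tau_L a\,u_N(x_L)\bigl(u_N(x_L)-g(t)\bigr).
\]

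Next I would discard the nonpositive outflow contribution $-a\,u_N(x_R)^2\le 0$ (here $a>0$ is used), leaving a quadratic in the single quantity $w:=u_N(x_L)$, namely $a(1+2\tau_L)\,w^2 - 2\tau_L a\,w\,g(t)$. Since $\tau_L < -\tfrac12$, the coefficient $1+2\tau_L$ is strictly negative; writing $\beta := -(1+2\tau_L) > 0$ and completing the square gives
\[
a(1+2\tau_L)\,w^2 - 2\tau_L a\,w\,g(t) = -a\beta\Bigl(w + \tfrac{\tau_L}{\beta} g(t)\Bigr)^2 + \frac{a\tau_L^2}{\beta}\,g(t)^2 \le \frac{a\tau_L^2}{\beta}\,g(t)^2 .
\]
Hence $\frac{\d}{\d t}\norm{u_N(t)}^2 \le C\,g(t)^2$ with $C := a\tau_L^2/\bigl(-(1+2\tau_L)\bigr) > 0$. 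Integrating over $[0,t]$ and bounding $\int_0^t g(s)^2\intd s \le t\max_{s\in[0,t]}|g(s)|^2$ produces
\[
\norm{u_N(t)}^2 \le \norm{(u_{\text{init}})_N}^2 + C\,t\,\max_{s\in[0,t]}|g(s)|^2 \le K(t)\Bigl(\norm{(u_{\text{init}})_N}^2 + \max_{s\in[0,t]}|g(s)|^2\Bigr),
\]
with $K(t) = \max\{1, Ct\}$, which is exactly \eqref{es:energy_stable}.

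The integration by parts and the square completion are routine. The only point deserving care is the precise role of the threshold $\tau_L<-\tfrac12$: it is exactly what makes the leftover boundary quadratic $a(1+2\tau_L)w^2 - 2\tau_L a\,w\,g$ admit a bound of the form $C g^2$ uniformly in $w$; with $\tau_L \ge -\tfrac12$ the $w^2$-coefficient is nonnegative and no such uniform bound exists. A secondary, essentially notational, subtlety is justifying the delta-evaluations (immediate by continuity of $u_N$) and observing that no boundary penalty is needed at the outflow boundary $x_R$, which is why $SAT_R$ may be set to zero without affecting stability.
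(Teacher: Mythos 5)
Your proposal is correct and follows essentially the same route as the paper: the energy method with integration by parts, the sifting property of $\delta_{x_L}$, discarding the nonpositive outflow term, and completing the square in $u_N(x_L)$ under the condition $\tau_L < -\tfrac{1}{2}$ to obtain $\frac{\d}{\d t}\norm{u_N(t)}^2 \leq -\tfrac{\tau_L^2 a}{1+2\tau_L}\,g(t)^2$. Your final integration in time to exhibit $K(t)$ explicitly is a welcome addition, as the paper leaves that last step implicit.
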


\begin{proof} 
From \eqref{semi_RBF_SAT}, we get the conservation relation:
\begin{equation}\label{eq:RBF_SAT_STABI}
\begin{aligned}
	\frac{\d}{\d t} \int_\Omega u_N^2 \intd x 
    	= & - 2 a\int_\Omega u_N u_N' \intd x  +2 u_N SAT_L \\
    	= & - a\left[ u_N(x_R)^2 - u_N(x_L)^2 \right] +2 u_N(x_L) \tau_L a^+ (u_N(x_L)-g)\\
    	= & -au_N(x_R)^2 + au_N(x_L)^2  +2 \tau_L a u_N(x_L) (u_N(x_L)-g).
\end{aligned}
\end{equation}
As obviously $au_N(x_R)^2>0$, we can bound the above equation by 
\begin{equation}\label{eq:RBF_SAT_STAB_RB_2}
\begin{aligned}
  	\frac{\d}{\d t} \int_\Omega u_N^2 \intd x 
    	\leq & \,au_N(x_L)^2 +2 \tau_L a u_N(x_L)(u_N(x_L)-g).
\end{aligned}
\end{equation}
Extending this equation under the condition $\tau_L <- \frac{1}{2}$ yields
\begin{equation}\label{eq:RBF_SAT_STAB_RB_3}
\begin{aligned}
  	\frac{\d}{\d t} \int_\Omega u_N^2 \intd x 
    	\leq & \, (1+2\tau_L)au_N(x_L)^2 -  2\tau_La g u_N(x_L) +\frac{a\tau_L^2}{1+2\tau_L} g^2 -\frac{a\tau_L^2}{1+2\tau_L} g^2 \\
    	\leq &\, -\frac{\tau_L^2 a g^2}{1+2\tau_L},
\end{aligned}
\end{equation}
which implies energy stability in accordance to Definition \ref{def:energy_stable}.
\end{proof}

\begin{remark}
The above described SAT approach can be interpreted as the use of a numerical flux which is determined to guarantee the energy estimate. 
Hence, regarding the results obtained in \S \ref{sec:RBF_FR}, the correction terms in the FR approach can be interpreted as SATs, and vice-versa.
\end{remark}

\begin{remark}\label{SBP_Stability}
Here, we only consider stability for continuous $L^2$ norms,  
rather than discrete $L^2$ norms that are commonly considered in the SBP-SAT framework. 
In particular, this requires us to assume exact integration of the volume and the SAT terms. 
In general, considering arbitrary multidimensional domains, this can be a fairly strong numerical constraint. 
In case of linear problems with constant coefficients, the impact of this limitation is damped by the use of exact quadrature rules in the discretization, and by the existence of a SBP-like condition when using RBFs. 
However, this also opens the possibility for future research by determining point distributions 
and RBFs that satisfy an SBP property (at least up to machine precision).
One can find a first approach in this direction in \cite{platte2006eigenvalue}, even though the authors do not explicitly draw the connection to SBP operators. 
It might be argued that this demonstrates a missing communication between the different communities (FE/FD methods and RBF methods). 
This work can therefore be seen as an attempt to build a bridge between these different perspectives on numerical methods for PDEs. 
Furthermore, future works could address the extension of the proposed methods to nonlinear problems. 
For these, for instance, entropy correction as suggested in \cite{abgrall2018general,abgrall2018connection,abgrall2019reinterpretation,abgrall2019analysis_2}
might be of interest, cf.\ section \ref{sec:extensions}.
\end{remark}

\subsection{Extension to Two Dimensional Problems}
\label{sub:SAT-2d}

Next, we aim to extend SAT-RBF methods to multidimensional problems. 
As the extension to multidimensional problems can be done in the same way, for sake of simplicity, we only focus on two-dimensional model problems.
Hence, let us consider the linear problem  
\begin{equation}\label{eq:model_problem_2d}
\begin{aligned}
	\partial_t u + a_1 \partial_{x} u + a_2 \partial_y u &= 0, \quad 
		&& \bm{x} \in \Omega, \ t > 0, \\ 
  	u & = u_{\text{init}}, \quad 
		&& \bm{x} \in \Omega, \ t = 0, \\ 
  	u & = g , \quad 
		&& \bm{x} \in \partial \Omega, \ t > 0,
\end{aligned}
\end{equation}
with $\Omega \subset \R^2$. 
By $\partial \Omega^-$ and $\partial \Omega^+$ we respectively denote the part of the boundary where inflow and outflow conditions are applied. 
Moreover, we assume that the imposed BCs yield to the well-posedeness of \eqref{eq:model_problem_2d}; see \cite{nordstrom2017roadmap}.
Discretizing \eqref{eq:model_problem_2d} by the RBF-SATs approach yields
\begin{equation}\label{eq:semi_RBF_SAT_2d}
	\partial_t u_N = -a_1 \partial_x u_N -a_2 \partial_y u_N + SAT^{\text{in}}.
\end{equation}
Here, $SAT^{\text{in}}$ represents the inflow BC. 
That is,
\begin{equation}
SAT^{in} =\Pi \delta_{\partial \Omega^-}(u_N-g)
\end{equation}
with $\Pi = \Pi(\bm{x}) \in \R^2$ being the boundary operator that remains to be characterized such that linear stability is guaranteed. 
Moreover, $\delta_{\partial \Omega^-}$ denotes the generalized function with the property that 
\begin{equation}
	\int_\Omega \Pi \delta_{\partial \Omega^-} v \intd \bm{x} = 
		\int_{\partial \Omega^-} \left( \Pi \cdot \bm{n} \right) v \intd s, 
\end{equation} 
where $\bm{n}$ represents the outward normal to the boundary.
In order to derive a stability estimate, we use the Gauss theorem to obtain 
\begin{equation}\label{eq:RBF_SAT_STAB_RB_2_two_d}
\begin{aligned}
  \frac{\d}{\d t} \int_\Omega u_N^2 \intd \bm{x} 
    	& = -  \int_{\partial \Omega^-} \left[ u_N^2 \left( (a_1,a_2)^T \cdot \bm{n} \right) - 2u_N \left( \Pi \cdot \bm{n} \right) (u_N-g) \right] \intd s.
  \end{aligned}
\end{equation}
Thus, linear stability is guaranteed by the following admissibility condition on $\Pi$:
\begin{equation}\label{eq:condition_2}
	u_N^2 \left( (a_1,a_2)^T \cdot \bm{n} \right) - 2u_N \left( \Pi \cdot \bm{n} \right) (u_N-g) \geq 0
\end{equation} 

This expression highly depends on the domain and the BCs themselves. 
Assuming no incoming wave ($g|_{\partial \Omega^-}=0$) condition \eqref{eq:condition_2} reduces to $2 \Pi\cdot \bm{n}\leq (a_1,a_2)^T \cdot \bm{n}$. 
We summarize this in the following lemma. 

\begin{lemma}[Linear stability of the SAT-RBF method in two dimensions]\label{thm:SAT-stab_two}
	Given is the SAT-RBF method \eqref{eq:semi_RBF_SAT_2d} with boundary operator $\Pi$
	satisfying \eqref{eq:condition_2}. 
	Then, the SAT-RBF method is strongly energy stable for the two dimensional 
advection equation \eqref{eq:model_problem_2d}.
\end{lemma}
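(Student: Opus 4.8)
The plan is to run exactly the one-dimensional energy argument of Lemma~\ref{thm:SAT-stab}, with the divergence theorem taking the place of integration by parts. First I would multiply the semidiscretization \eqref{eq:semi_RBF_SAT_2d} by $2u_N$ and integrate over $\Omega$, obtaining
\begin{equation*}
\begin{aligned}
	\frac{\d}{\d t} \int_\Omega u_N^2 \intd \bm{x}
	&= -2\int_\Omega u_N\left( a_1 \partial_x u_N + a_2 \partial_y u_N \right) \intd \bm{x} \\
	&\quad + 2\int_\Omega u_N\, \Pi\, \delta_{\partial \Omega^-}(u_N-g) \intd \bm{x}.
\end{aligned}
\end{equation*}
Since $a_1,a_2$ are constant, the first integrand equals $\tfrac12\, \nabla\cdot\bigl( (a_1,a_2)^T u_N^2 \bigr)$, so the Gauss theorem turns it into $-\int_{\partial\Omega} u_N^2\, \bigl( (a_1,a_2)^T\cdot\bm{n}\bigr) \intd s$, while the defining property of $\delta_{\partial\Omega^-}$ turns the SAT term into $2\int_{\partial\Omega^-} (\Pi\cdot\bm{n})\, u_N(u_N-g) \intd s$. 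Splitting $\partial\Omega = \partial\Omega^- \cup \partial\Omega^+$ and noting that $(a_1,a_2)^T\cdot\bm{n}\ge 0$ on the outflow part $\partial\Omega^+$, so that this contribution is $\le 0$ and may be discarded, I arrive precisely at the estimate \eqref{eq:RBF_SAT_STAB_RB_2_two_d}.

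The next step is to invoke the admissibility condition \eqref{eq:condition_2}: it states exactly that the integrand appearing in \eqref{eq:RBF_SAT_STAB_RB_2_two_d} is pointwise nonnegative on $\partial\Omega^-$, hence $\frac{\d}{\d t}\norm{u_N(t)}^2 \le 0$. Integrating in time gives $\norm{u_N(t)}^2 \le \norm{(u_{\text{init}})_N}^2$, which is \eqref{es:energy_stable} with $K(t)\equiv 1$. In the homogeneous inflow case $g|_{\partial\Omega^-}=0$ this is immediate, and \eqref{eq:condition_2} collapses to the cone condition $2\,\Pi\cdot\bm{n} \le (a_1,a_2)^T\cdot\bm{n}$, which is realizable (e.g.\ $\Pi = \tfrac12 (a_1,a_2)^T$, or any scaling toward $\bm{0}$).

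For a genuinely inhomogeneous inflow the inequality \eqref{eq:condition_2} as written involves both $u_N$ and $g$, so recovering the full Definition~\ref{def:energy_stable} estimate — the one carrying the $\max_{s\in[0,t]}|g(s)|^2$ term — requires repeating the completing-the-square step \eqref{eq:RBF_SAT_STAB_RB_3} pointwise in $\bm{x}$ along $\partial\Omega^-$. Abbreviating $c(\bm{x}) := (a_1,a_2)^T\cdot\bm{n}$ and $p(\bm{x}) := \Pi\cdot\bm{n}$, the boundary integrand in \eqref{eq:RBF_SAT_STAB_RB_2_two_d} is the quadratic $(c-2p)u_N^2 + 2p\,g\,u_N$ in $u_N$; imposing the strict cone condition $c - 2p > 0$ on $\partial\Omega^-$ (the exact analogue of $\tau_L < -\tfrac12$, since at $x_L$ one has $c=-a$ and $p=\tau_L a$) makes this quadratic convex, hence bounded below by $-p^2 g^2/(c-2p)$, so that $\frac{\d}{\d t}\norm{u_N}^2 \le \int_{\partial\Omega^-} \tfrac{p^2}{c-2p}\, g^2 \intd s \le C\,|\partial\Omega^-|\,\max_{s\in[0,t]}|g(s)|^2$ with $C := \sup_{\partial\Omega^-} p^2/(c-2p)$; integrating in time then yields \eqref{es:energy_stable} with $K(t) = \max\{1,\, C\,|\partial\Omega^-|\,t\}$, uniformly in $N$. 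I expect this last step — pinning down exactly which pointwise sign/size condition on $\Pi$ upgrades mere non-increase of the energy into the boundary-data-dependent bound of Definition~\ref{def:energy_stable} — to be the only real subtlety; everything else is the divergence theorem plus the one-dimensional template. Throughout, as flagged in Remark~\ref{SBP_Stability}, I would assume exact integration of the volume and SAT terms, so that these semidiscrete manipulations are legitimate.
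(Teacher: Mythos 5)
Your argument is correct and follows the same route the paper takes: multiply by $2u_N$, apply the Gauss theorem to reach \eqref{eq:RBF_SAT_STAB_RB_2_two_d}, and conclude from the pointwise admissibility condition \eqref{eq:condition_2} that the energy is nonincreasing. The only differences are that you make explicit two things the paper leaves implicit --- that dropping the outflow contribution turns \eqref{eq:RBF_SAT_STAB_RB_2_two_d} into an inequality, and that for inhomogeneous $g$ a pointwise completing-the-square (the 2D analogue of $\tau_L<-\tfrac12$) is what upgrades mere non-increase into the full bound of Definition \ref{def:energy_stable} --- both of which are correct and, if anything, tighten the paper's presentation.
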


\subsection{Extension to Linear Systems in One Space Dimension}

Finally, we demonstrate how the SAT-RBF approach can be extended to systems of linear advection equations. 
Let us consider the following one-dimensional model problem: 
\begin{equation}\label{eq:1}
\aligned
 \partial_t \bm{u} +A \partial_x \bm{u} & = 0,\quad &&  x\in (0,1), \ t> 0, \\
 L_0(\bm{u}) & = g_0(t), \quad &&  x=0, \ t > 0, \\
 L_1(\bm{u}) & = g_1(t), \quad  &&  x=1, \ t > 0, \\
 \bm{u}(x,0) & = \bm{u}_{\text{init}}, \quad && x \in [0,1], \ t=0
\endaligned
\end{equation} 
Here, $\bm{u} = (u_1,\dots,u_m)^T$ is the vector of conserved variables and $A \in \R^{m \times m}$ is a symmetric matrix. 
Furthermore, $L_0$ and $L_1$ are assumed to be linear operators. 
We denote by $n_0$ the number of incoming characteristics at $x=0$ and by $n_1$ the number of incoming characteristics at $x=1$. 
Thus, the rank of $L_0$ and $L_1$ respectively is $n_0$ and $n_1$.
We start by noting that multiplying the continuous differential equation in \eqref{eq:1} from the left hand side by $\bm{u}^T$, we get 
\begin{equation}
  \int_0^1 \bm{u}^T  \partial_t  \bm{u} \intd x = - \int_0^1 \bm{u}^T A \partial_x \bm{u} \intd x.
\end{equation} 
As the energy is given by $E=\tfrac{1}{2}\int_0^1 \bm{u}^2 \intd x$, it satisfies through the use of the Gauss theorem 
\begin{equation}\label{eq:energy_Extension}
  \frac{\d}{\d t} E+\bm{u}(1,t)^T A \bm{u}(1,t)- \bm{u}(0,t)^T A \bm{u}(0,t)= 0.
\end{equation}
We then apply the SAT term corresponding to the incoming characteristics. 
Imposing the BC weakly and using the RBF formulation for the problem \eqref{eq:1} in \eqref{eq:energy_Extension}, we obtain
\begin{equation}\label{eq:three} 
\begin{aligned}
  	\frac{\d}{\d t} E_N 
		= \ & \bm{u}_N(1,t)^T \left(-A \bm{u}_N(1,t)+ \Pi_1(L_1(\bm{u}_N)-g_1)\right) \\ 
  		& + \bm{u}_N(0,t)^T\left(A\bm{u}_N(0,t)-\Pi_0(L_0(\bm{u}_N-g_0)) \right), 
\end{aligned}
\end{equation}
where $\Pi_0$ and $\Pi_0$ are the boundary operators. 
These have to be chosen such that the change of energy yields an estimate as described in Definition \ref{def:energy_stable}. 
Moreover, for any $t$, the image of the boundary operator $L_0$ and $L_1$ respectively has to be the same as the image of $\Pi_0L_0$ and $\Pi_1L_1$.
This ensures that there is no loss of information at the boundary.

This can be solved by decomposing $A$ in its diagonal form and rewriting the boundary procedure by using characteristic variables instead of $\bm{u}$. 
However, to specify precisely $\Pi_0$ and $\Pi_1$, one has to consider the problem itself, e.\,g.\ the wave equation. 
More details can be found in \cite{abgrall2019analysis}. 
If we set the BC to zero at the inflow part and suppose that $L_0$ and $L_1$ are simply given by the identity matrix, we can derive from \eqref{eq:three} the following result.

\begin{lemma}[Linear stability of the SAT-RBF method for system]\label{thm:SAT-stab_system}
	Assume that the boundary operator $\Pi$ can be selected such that the matrix
$\Pi-A+(\Pi-A)^T$ is negative semi-definite. 
	Then, the SAT-RBF method is strongly energy stable for the linear symmetric system  \eqref{eq:1}.
\end{lemma}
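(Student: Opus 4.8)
The plan is to specialise the energy identity \eqref{eq:three} to the setting announced just before the statement — homogeneous inflow data $g_0 \equiv g_1 \equiv 0$ together with $L_0 = L_1 = I$ — and then to show that the stated semidefiniteness of the symmetrised boundary operator forces $\frac{\d}{\d t} E_N \le 0$, from which \eqref{es:energy_stable} follows immediately. With these choices the two SAT contributions in \eqref{eq:three} collapse to $\Pi_1 \bm{u}_N(1,t)$ and $\Pi_0 \bm{u}_N(0,t)$, so that
\[
	\frac{\d}{\d t} E_N
		= \bm{u}_N(1,t)^T \bigl( \Pi_1 - A \bigr) \bm{u}_N(1,t)
		- \bm{u}_N(0,t)^T \bigl( \Pi_0 - A \bigr) \bm{u}_N(0,t),
\]
i.e.\ a sum of two boundary quadratic forms that must be rendered non-positive.

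The elementary but decisive step is the identity $\bm{v}^T M \bm{v} = \tfrac12 \bm{v}^T ( M + M^T ) \bm{v}$, valid for every matrix $M$ and vector $\bm{v}$. Applied with $M = \Pi_i - A$, it shows that the sign of each boundary term is governed exactly by the symmetrised matrix $(\Pi_i - A) + (\Pi_i - A)^T$ appearing in the hypothesis. Using the orthogonal eigendecomposition $A = A^+ + A^-$ of the symmetric matrix $A$ into its non-negative and non-positive parts, the incoming characteristics at $x=0$ span the range of $A^+$ and those at $x=1$ the range of $A^-$; restricting each $\Pi_i$ to the corresponding incoming subspace — which is precisely the ``no loss of boundary information'' requirement $\operatorname{im}(\Pi_i L_i) = \operatorname{im}(L_i)$ — and invoking the assumed semidefiniteness (with the outward normal, $-1$ at $x=0$ and $+1$ at $x=1$, accounted for) yields $\frac{\d}{\d t} E_N \le 0$. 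Integrating in time gives $E_N(t) \le E_N(0) = \tfrac12 \norm{(\bm{u}_{\text{init}})_N}^2$, hence $\norm{\bm{u}_N(t)}^2 = 2 E_N(t) \le \norm{(\bm{u}_{\text{init}})_N}^2$, which is exactly the estimate \eqref{es:energy_stable} of Definition \ref{def:energy_stable} with $K(t) \equiv 1$ and with the $g$-term absent because the data were taken homogeneous.

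The genuine difficulty I anticipate is not this estimate — a few lines once the set-up is fixed — but justifying that the \emph{single} admissibility condition stated in the lemma, read correctly at both endpoints, really controls both boundary quadratic forms, and that the $\Pi_i$ realising it are compatible with the rank/image constraints on $L_0$ and $L_1$. For concrete systems, such as the wave equation written in first-order form, one then exhibits an explicit $\Pi$ built from $A^\pm$; I would carry out the homogeneous case in full and, for the construction of $\Pi$ in specific applications, refer to \cite{abgrall2019analysis}. Finally, I would append a remark that inhomogeneous data $g_i \neq 0$ are handled exactly as in the scalar Lemma \ref{thm:SAT-stab}: keep the diagonal part of $\Pi_i(\bm{u}_N - g_i)$ and absorb the cross term by a Young inequality, which reinstates the $\max_{s \in [0,t]} |g(s)|^2$ contribution in \eqref{es:energy_stable}.
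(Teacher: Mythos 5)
Your argument is correct and follows exactly the route the paper intends: the paper itself offers no written proof of this lemma, merely asserting that it ``can be derived from \eqref{eq:three}'' under the simplifications $g_0=g_1=0$ and $L_0=L_1=I$, and your specialisation of \eqref{eq:three} combined with the symmetrisation identity $\bm{v}^T M \bm{v} = \tfrac12 \bm{v}^T(M+M^T)\bm{v}$ is precisely that derivation carried out in full. Your observation that the single stated condition on $\Pi-A+(\Pi-A)^T$ must be read with opposite orientation at the two endpoints (via the outward normal) addresses a looseness in the paper's own statement rather than a defect in your proof.
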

 
\section{Potential Extensions}
\label{sec:extensions} 

This section briefly addresses some possible extensions of the proposed approach to construct linearly energy stable RBF methods. 
These extensions include local RBF-FD methods (\S \ref{sub:local-RBF-methods}), (entropy) stability for nonlinear problems (\S \ref{sub:nonlinear-problems}), and numerical integration (\S \ref{sub:num-int}).

\subsection{Local Radial Basis Function Methods} 
\label{sub:local-RBF-methods}

Thus far, we have only considered global RBF methods. 
Yet, an obvious concern with these are their computational costs. 
In fact, finding a global RBF interpolant or calculating a differentiation matrix each cost $\mathcal{O}(N^3)$ operations for $N$ nodes. 
While in the discussed methods this can be done a priori once---before time stepping\footnote{Assuming the nodes do not change over time.}---there are additional $\mathcal{O}(N^2)$ operations each time a differentiation matrix is applied---during time stepping. 
Local RBF-FD are considered as one of the leading options to remedy this problem.\footnote{The conference presentation \cite{tolstykh2000using} by Tolstykh in 2000 seems to be the earliest reference to RBF-FD methods.} 
Conceptually, these methods can be interpreted as an extreme case of overlapping domain decomposition, with a separate domain surrounding each node. 
The basic idea is to center a local RBF-FD stencil at each of the $N$ global nodes, and let it include the $n-1$ nearest neighbors, where $n \ll N$. 
For every node, and based on its surrounding stencil, a local FD formula that is exact for all RBF interpolants on that stencil---potentially including polynomilas---is derived then from a system of linear equations similar to \eqref{eq:LS-RBF-interpol}. 
The main difference is that the right hand side is replaced by the nodal values of a linear differentiation operator.
For more details, see \cite[Chapter 5]{fornberg2015primer} and references therein. 
Unfortunately, the framework discussed in the present manuscript is not immediately applicable to local RBF-FD methods. 
Yet, we think that it is mediately transferable by replacing exact integrals and differentiation operators by their discrete counterparts as long as these satisfy certain SBP properties. 
In this case, many stability properties which are based on integration by parts (the continuous analogue of SBP) would still be satisfied in a discrete norm.
We intend to address these extensions in future works.

\subsection{Variable Coefficients and Nonlinear Problems} 
\label{sub:nonlinear-problems}

Up to this points, only linear problems with constant coefficients  have been considered.
In what follows, we provide some comments regarding linear problems with variable coefficients as well as nonlinear problems, namely hyperbolic conservation laws.
Let us consider a linear advection equation
\begin{equation}\label{eq:Model_problem2}
\begin{aligned}
   	\partial_t u(t,x)+\partial_x \left( a(x) u(t,x) \right) & = 0, \quad 
   		&& x \in (x_L, x_R), \ t>0, \\
   	u(t,x_L) & = g_L(t), \quad 
   		&& t \geq 0, \\
   	u(0,x) & = u_{\text{init}}(x), \quad 
		&& x \in [x_L,x_R]
\end{aligned}
\end{equation}
with variable velocity $a(x)>0$ as well as compatible IC $u_{\text{init}}$ and BC $g_L$.
As described \emph{inter alia} in \cite{nordstrom2006conservative,offner2019error},
the energy is bounded for a fixed time interval if $\partial_x a$ is bounded. 
In fact, one has 
\begin{equation}
\begin{aligned}
 	\|u(t)\|^2 \leq & \exp \left(t \|\partial_x a\|_{L^\infty} \right) \cdot \\ 
	& \left( \|u_{\text{init}}\|^2 +
 \int_0^t \exp \left( -\tau \| \partial_x a \|_{L^\infty} \right)
  \left[ a(x_L) g_L(\tau)^2 - a(x_R)u(\tau,x_R)^2 \right]  \intd \tau \right);
\end{aligned}
\end{equation}
see \cite[Section 2]{nordstrom2017conservation}. 
To obtain a similar bound for the semidiscretization of \eqref{eq:Model_problem2} by a numerical method, often a skew-symmetric formulation has to be used. 
This is, for instance, a well-established technique for SBP-based methods discussed in Remark 
 \ref{SBP_Stability}. 
 Assuming an SBP property is holding for the RBF method, the transfer of stability results would be straightforward.
In the one-dimensional setting, a suitable skew-symmetric formulation of \eqref{eq:Model_problem2} might be
\begin{equation}
\partial_x(a(x)u(t,x))= \alpha \partial_x(a(x)u(t,x))+ (1-\alpha) \left( u(t,x) \partial_x a(x) +a(x) \partial_x u(t,x) \right)  
\end{equation}
with $\alpha=0.5$. 
A more detailed investigation of stability for variable coefficient problems and the usage of skew-symmetric formulations in RBF methods will be provided in future works. 
There also nonlinear problems of the form 
\begin{equation}
\begin{aligned}
   \partial_t u(t,x)+\div f(u(t,x))&=0, &t>0,\; x\in \Omega \subset \R^n,\\
   u(0,x)&=u_0(x), & x\in  \Omega, \\
   L(u(t,x))&= G(t,x)  & t>0,  x\in  \partial \Omega
 \end{aligned}
\end{equation} 
will be considered. 
Here, $L$ and $G$ are boundary operators which are assumed to yield a well-posed problem; see \cite{svard2014review, nordstrom2017roadmap} and references therein.  
In the context of nonlinear problems it is often argued---especially in the computational fluid dynamics community---that entropy stability, rather than energy stability, is an appropriate stability concept. 
See \cite{hesthaven2019entropy,chen2017entropy,abgrall2018general,abgrall2019reinterpretation, fjordholm2012arbitrarily} and references therein.
Unfortunately, entropy stability alone is not necessarily implying convergence of a numerical scheme. 
Yet, experience has demonstrated that entropy stable methods are often more reliable and robust than comparable methods which violate entropy conditions
A crucial tool to ensure entropy stability are numerical (two-point) fluxes developed by Tadmor 
\cite{tadmor2003entropy} together with a proper splitting of volume terms \cite{chen2017entropy}. 
Another possible approach might be entropy correction as proposed in \cite{abgrall2018general,abgrall2019reinterpretation}. 
Of course, also the imposed BCs themselves play an essential role \cite{svard2020entropy}.
An investigation of these techniques in the context of RBF methods will be part of future research as well.

\subsection{Numerical Integration}
\label{sub:num-int}

When discretizing the SAT terms discussed in \S \ref{sec:RBF_SAT}, but also for numerically determining suitable correction functions in \S \ref{sec:RBF_FR}, we are in need of computing certain integrals. 
Depending on the number of degrees of freedom and the dimension, this can be computationally costly. 
Hence, future works might also address the advantage---and potential pitfalls---of replacing continuous integrals in the RBF method by a discrete quadrature (in one dimension) or cubature (in higher dimensions) formula. 
In one dimension, reasonable candidates are the trapezoidal or Gauss--Legendre/Lobatto formula. 
In more then one dimension, and assuming a rectangular domain, their tensor products of these rules could be used; see some of the excellent monographs \cite{haber1970numerical,stroud1971approximate,engels1980numerical,davis2007methods,trefethen2017cubature}. 
Yet, these would not be available in the notoriously difficult case of non-standard, in particular non-rectangular, domains. 
Here, an alternative might be given by classical (quasi-)Monte Carlo methods \cite{metropolis1949monte,niederreiter1992random,caflisch1998monte,dick2013high} or the recently developed high-order least squares cubature formulas \cite{glaubitz2020stableCF,glaubitz2020constructing}, based on the one dimensional works \cite{wilson1970necessary,wilson1970discrete,huybrechs2009stable,glaubitz2020stable}.
\section{Numerical Results} 
\label{sec:num}

In what follows, we numerically investigate the two proposed approaches to construct linear stable RBF methods and compare them to the usual RBF method. 
All tests are performed for polyharmonic splines (PHS) using a cubic ($k=2$) and a quintic ($k=3$) kernel. 
Moreover, polynomials of degree less than $m = 2$ and $m=3$ have been included, respectively. 
This ensures that the resulting RBF interpolant is always well-defined; see \cite{iske2003radial} or \cite[Chapter 3.1.5]{glaubitz2020shock}. 
While the proposed RBF methods could also be applied to other kernels, such as Gaussian or compactly supported ones (Wendland functions), PHS allow us to circumvent the (sometimes difficult) choice of the shape parameter $\varepsilon$. 
Of course, a discussion and comparison of different kernels would be highly interesting. 
Yet, it would also exceed the scope of this work. 
The same holds for the distribution of the centers. 
It should be noted that the stability results discussed in \S \ref{sec:RBF_FR} and \S \ref{sec:RBF_SAT} hold independently of the kernel $\varphi$ and centers $X$ (and $\tilde{X}$). 
Hence, for sake of simplicity, we only consider equidistant centers in the subsequent numerical tests.

\subsection{One-Dimensional Scalar Problems} 

Let us start by considering the scalar one-dimensional advection problem 
\begin{equation}\label{eq:test_linear-adv}
\begin{aligned}
	\partial_t u + \partial_x \left( a u \right) & = 0, \quad && 0 < x < 1, \ t > 0, \\ 
	u(t,x_L) & = g(t), \quad && t > 0, \\
	u(0,x) & = u_{\text{init}}(x), \quad && 0 \leq x \leq 1, 
\end{aligned}
\end{equation}
with constant velocity $a=1$. For the SAT-RBF method we used $\tau = -1$ unless otherwise stated.

\subsubsection{Inflow Boundary Condition} 

Given are the BC and IC 
\begin{equation}\label{eq:BC_inflow_bump} 
	g(t)=u_{\mathrm{init}}(0.5-t), \quad 
  	u_{\mathrm{init}}(x) = 
  	\begin{cases}
    		e^{16} \exp \left( \frac{-16}{1-(4x-1)^2} \right) & \text{if } 0 < x < 0.5, \\ 
    		0 & \text{otherwise}. 
  	\end{cases}
\end{equation}
That is, we have a smooth IC and an inflow BC at the left boundary $x=0$. 
The results for the usual RBF, FR-RBF and SAT-RBF method for $N=40$ grid points at time $t=0.5$ are displayed in Figure \ref{fig:1d_inflow_bump}. 

\begin{figure}[!htb]
  \centering
  \begin{subfigure}[b]{0.4\textwidth}
    \includegraphics[width=\textwidth]{%
      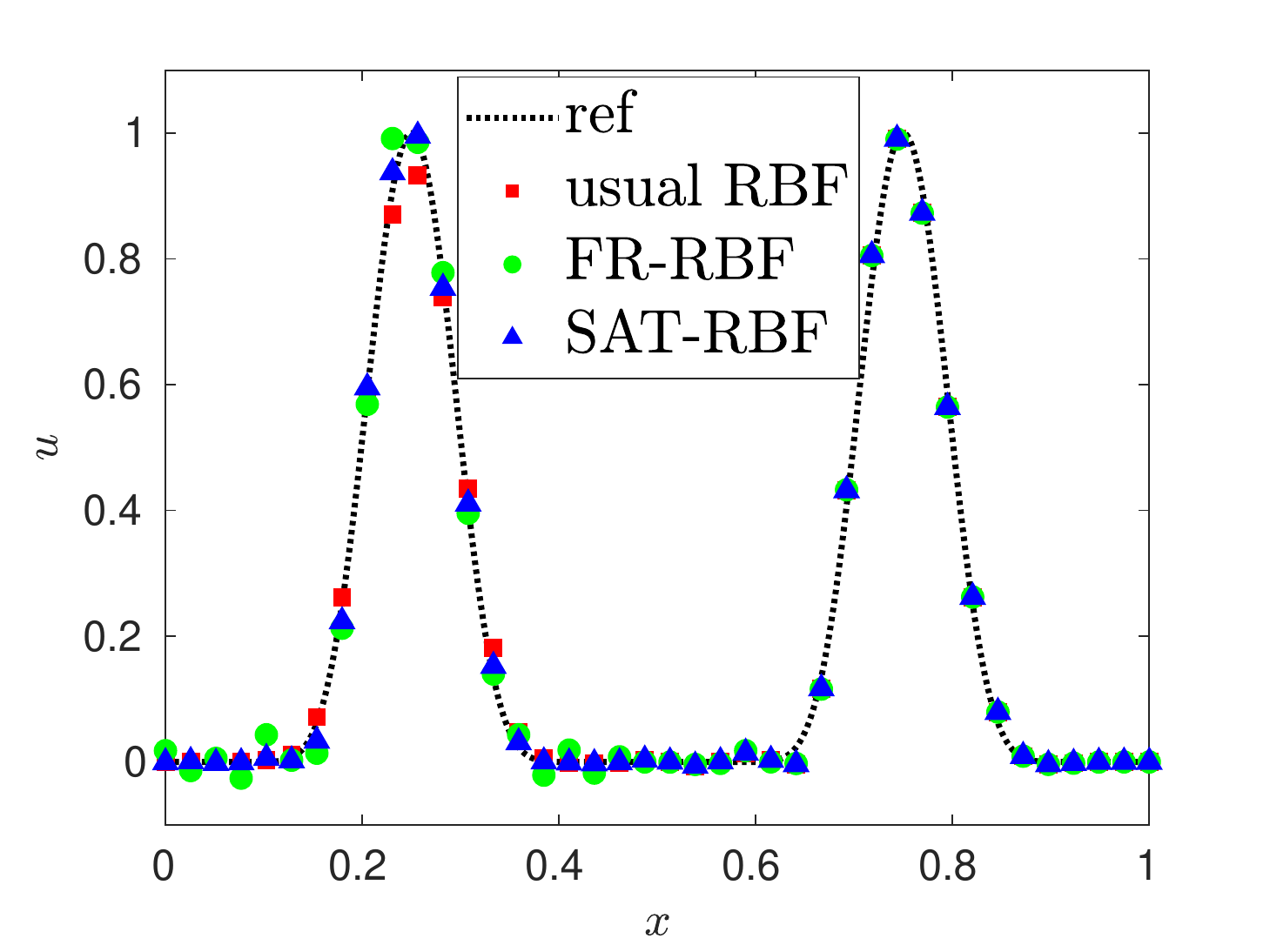}
    \caption{Cubic kernel}
    \label{fig:1d_inflow2_bump_sol_cubic} 
  \end{subfigure}%
  ~ 
  \begin{subfigure}[b]{0.4\textwidth}
    \includegraphics[width=\textwidth]{%
      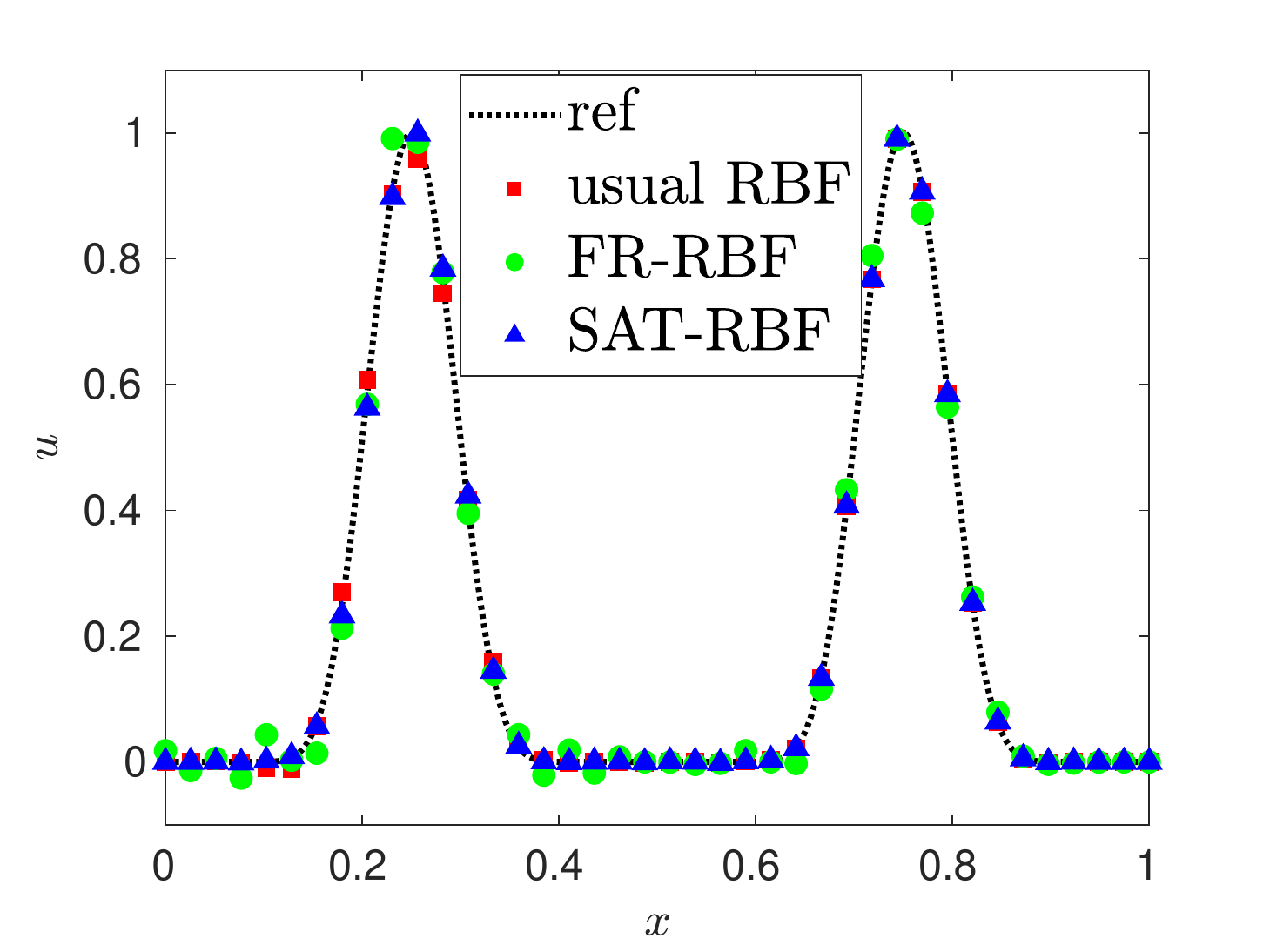}
    \caption{Quintic kernel}
    \label{fig:1d_inflow2_bump_sol_quintic} 
  \end{subfigure}%
  \caption{Numerical solutions of \eqref{eq:FR_linear-adv} with IC and inflow BCs as in \eqref{eq:BC_inflow_bump} for $N=40$ at $t=0.5$}
  \label{fig:1d_inflow_bump}
\end{figure}

From Figure \ref{fig:1d_inflow_bump}, we can already note the numerical stability issues for the FR-RBF method which were discussed in \S \ref{sub:pitfalls}. 
These stem from high condition numbers of the matrix $A$ in the linear system \eqref{eq:LSE} that has to be solved to recover the correction functions $c_L$ and $c_R$. 
In fact, this lack of numerical stability for the FR-RBF method can also be noted from increased errors in Table \ref{tab:1d_inflow}.

\begin{table}[htb]
  \renewcommand{\arraystretch}{1.3}
  \centering 
  \begin{adjustbox}{width=0.95\textwidth}
  \begin{tabular}{l l c c c c c c c c c c c c }
    \multicolumn{2}{c}{} 
      & \multicolumn{6}{c}{$\ell^1$-errors} 
      & \multicolumn{6}{c}{$\ell^\infty$-errors} \\ \hline 
    \multicolumn{2}{c}{} 
      & & \multicolumn{4}{c}{$N$} & $\mathcal{O}$  
      & & \multicolumn{4}{c}{$N$} & $\mathcal{O}$ \\ \hline
    kernel & Method & & $10$ & $20$ & $40$ & $80$ & 
	            & & $10$ & $20$ & $40$ & $80$ & \\ \hline 
    cubic & usual   & & 2.0E-1 & 1.1E-1 & 1.2E-2 & 2.1E-3 & 2.1 
		    & & 4.9E-1 & 3.2E-1 & 5.6E-2 & 1.5E-2 & 1.6 \\
	  & FR      & & 1.7E-1 & 1.5E-1 & 1.6E-2 & 2.0E-3 & 2.1 
		    & & 4.4E-1 & 6.9E-1 & 4.8E-2 & 1.3E-2 & 1.6 \\
	  & SAT     & & 1.5E-1 & 1.0E-1 & 9.8E-3 & 1.5E-3 & 2.2 
		    & & 4.3E-1 & 2.4E-1 & 3.6E-2 & 8.9E-3 & 1.8 \\ \hline
    quintic & usual & & 2.7E-1 & 6.5E-2 & 5.2E-3 & 9.7E-4 & 2.7 
		    & & 8.5E-1 & 2.1E-1 & 3.0E-2 & 4.3E-3 & 2.5 \\
	  & FR      & & 1.9E-1 & 7.4E+1 & 1.8E-2 & 3.0E-3 & 1.9 
		    & & 3.5E-1 & 1.8E+2 & 8.9E-2 & 1.5E-2 & 1.5 \\
	  & SAT     & & 2.0E-1 & 4.4E-2 & 3.6E-3 & 1.2E-3 & 2.4 
		    & & 3.5E-1 & 1.2E-1 & 2.3E-2 & 9.7E-3 & 1.7 \\ \hline
  \end{tabular}  
  \end{adjustbox}
  \caption{Errors for \eqref{eq:FR_linear-adv} with IC and inflow BCs as in \eqref{eq:BC_inflow_bump} at $t=0.5$}
  \label{tab:1d_inflow}
\end{table}

Table \ref{tab:1d_inflow} lists the $\ell^1$ and $\ell^\infty$ errors, 
\begin{equation} 
\begin{aligned}
	\norm{ u - u_N }_{\ell^1} & = \frac{1}{N} \sum_{n=1}^N |u(x_n) - u_N(x_n)|, \\
	\norm{ u - u_N }_{\ell^\infty} & = \max_{n=1,\dots,N} |u(x_n) - u_N(x_n)|, 
\end{aligned}
\end{equation}
and the resulting \emph{average orders}, $\mathcal{O} = \frac{1}{3} \sum_{j=1}^3 o_j$, where 
\begin{equation}
  o_j = \log_{2}\left( \frac{||u-u_{N_{j+1}}||}{||u-u_{N_j}||} \right)
\end{equation}
with $N_j = 10 \cdot 2^j$.

\subsubsection{Long Time Simulation with Periodic BCs} 
\label{sub:long-time}

Next, let us consider \eqref{eq:test_linear-adv} with periodic BCs and a smooth IC given by   
\begin{equation}\label{eq:BC_periodic} 
	u(t,0) = u(t,1), \quad 
  	u_{\mathrm{init}}(x) = \sin^2( 2 \pi x ).
\end{equation}
The results for $N=40$ for increasing end times are displayed in Figure \ref{fig:1d_periodic_sin^2}. 
Moreover, Figure \ref{fig:1d_periodic_sin^2_energy} illustrates the corresponding energy profiles of the different methods over time. 
Corresponding errors at time $t=100$ are presented in Table \ref{tab:1d_periodic_sin^2}.

\begin{figure}[!htb]
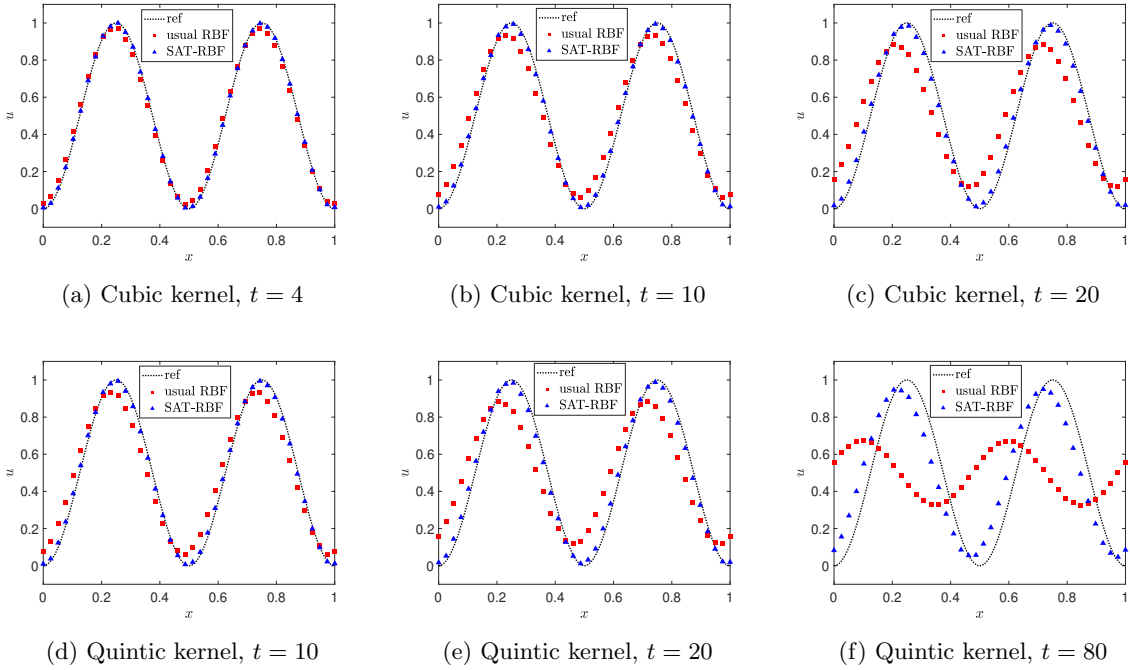

	\centering
	\begin{subfigure}[b]{0.32\textwidth}
		\includegraphics[width=\textwidth]{%
			plots/sol_cubic_sin_t=4_N=40}
		\caption{Cubic kernel, $t=4$}
		\label{fig:1d_periodic_sin^2_cubic_t4} 
	\end{subfigure}%
	~ 
	\begin{subfigure}[b]{0.32\textwidth}
		\includegraphics[width=\textwidth]{%
			plots/sol_cubic_sin_t=10_N=40}
		\caption{Cubic kernel, $t=10$}
		\label{fig:1d_periodic_sin^2_cubic_t10}
	\end{subfigure}%
	~ 
	\begin{subfigure}[b]{0.32\textwidth}
		\includegraphics[width=\textwidth]{%
			plots/sol_cubic_sin_t=20_N=40}
		\caption{Cubic kernel, $t=20$}
		\label{fig:1d_periodic_sin^2_cubic_t20}
	\end{subfigure}%
	\\  
	\begin{subfigure}[b]{0.32\textwidth}
		\includegraphics[width=\textwidth]{%
			plots/sol_quintic_sin_t=10_N=40}
		\caption{Quintic kernel, $t=10$}
		\label{fig:1d_periodic_sin^2_quintic_t10} 
	\end{subfigure}%
	~ 
	\begin{subfigure}[b]{0.32\textwidth}
		\includegraphics[width=\textwidth]{%
			plots/sol_quintic_sin_t=20_N=40}
		\caption{Quintic kernel, $t=20$}
		\label{fig:1d_periodic_sin^2_quintic_t20}
	\end{subfigure}%
	~ 
	\begin{subfigure}[b]{0.32\textwidth}
		\includegraphics[width=\textwidth]{%
			plots/sol_quintic_sin_t=80_N=40}
		\caption{Quintic kernel, $t=80$}
		\label{fig:1d_periodic_sin^2_quintic_t80}
	\end{subfigure}%
	\caption{Numerical solutions for \eqref{eq:FR_linear-adv} with periodic BCs and initial condition as in \eqref{eq:BC_periodic} for $N=40$}
	\label{fig:1d_periodic_sin^2}
\end{figure}

\begin{figure}[!htb]
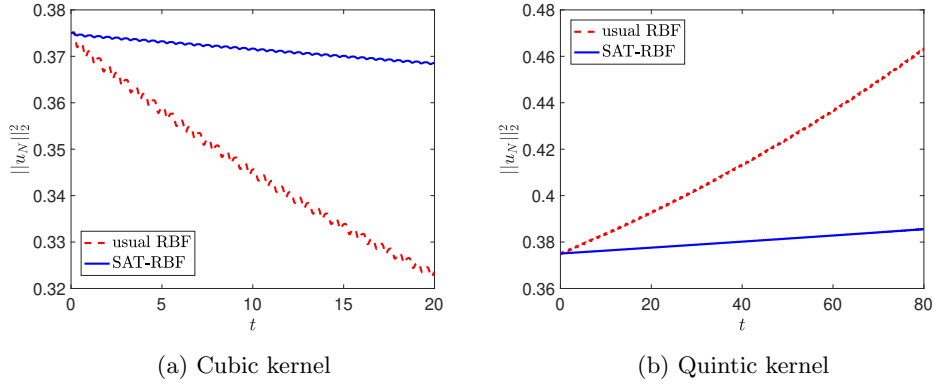

	\centering
	\begin{subfigure}[b]{0.4\textwidth}
		\includegraphics[width=\textwidth]{%
			plots/energy_cubic_sin_t=20_N=40}
		\caption{Cubic kernel}
		\label{fig:1d_periodic_sin^2_energy_cubic} 
	\end{subfigure}%
	~ 
	\begin{subfigure}[b]{0.4\textwidth}
		\includegraphics[width=\textwidth]{%
			plots/energy_quintic_sin_t=80_N=40}
		\caption{Quintic kernel}
		\label{fig:1d_periodic_sin^2_energy_quintic}
	\end{subfigure}%
	\caption{Energy profiles over time corresponding to \eqref{eq:FR_linear-adv} with periodic BCs and initial condition as in \eqref{eq:BC_periodic} for $N=40$}
	\label{fig:1d_periodic_sin^2_energy}
\end{figure}

\begin{table}[htb]
	\renewcommand{\arraystretch}{1.3}
	\centering 
	\begin{adjustbox}{width=0.95\textwidth}
		\begin{tabular}{l l c c c c c c c c c c c c }
			\multicolumn{2}{c}{} 
			& \multicolumn{6}{c}{$\ell^1$-errors} 
			& \multicolumn{6}{c}{$\ell^\infty$-errors} \\ \hline 
			\multicolumn{2}{c}{} 
			& & \multicolumn{4}{c}{$N$} & $\mathcal{O}$  
			& & \multicolumn{4}{c}{$N$} & $\mathcal{O}$ \\ \hline
			kernel & Method  & & $10$ & $20$ & $40$ & $80$ & 
			& & $10$ & $20$ & $40$ & $80$ & \\ \hline
			cubic 
			& usual & & 3.4E-01 & 3.3E-01 &	3.9E-01 & 1.7E-01 & 0.3 & & 4.7E-01
			& 4.9E-01&6.0E-01& 2.7E-01 & 0.2  \\
			& SAT   & &	2.9E-01 & 5.6E-01 & 1.5E-01 & 3.2E-02 & 1.0 & & 4.2E-01
			& 8.5E-01 & 2.3E-01 & 5.1E-02 & 1.1\\\hline
			
			quintic 
			&usual & & 2.4E+02 & 3.2E+00 & 3.2E-01 & 3.6E-02 & 4.2 & & 3.3E+02 & 4.8E+00 & 5.3E-01 & 5.7E-02 & 4.1 \\
			& SAT   & & 6.4E+02 & 3.5e+02 & 5.8E-02 & 4.9E-02 & 4.5& & 9.2E+02 & 6.6E+02 & 1.4E-01 & 1.1e-01 & 4.3 \\ \hline
		\end{tabular}  
	\end{adjustbox}
	\caption{Errors for \eqref{eq:FR_linear-adv} with periodic BCs and initial condition as in \eqref{eq:BC_periodic} at $t=100$}
	\label{tab:1d_periodic_sin^2}
\end{table}

Note that for this problem the energy of the solution should actually be constant over time. 
Yet, in both cases, using the cubic as well as the quintic kernel, we respectively observe the usual RBF method to result in clearly decreasing and increasing energy profiles.  
For the SAT-RBF method, on the other hand, we observe this decrease/increase to be significantly smaller. 
Unfortunately, the FR-RBF method blew up before the final time could be reached in some tests. 
This demonstrates once more the numerical instability of the FR-RBF method. 
Henceforth, we therefore only consider the usual and SAT-RBF method. 
The SAT-RBF method, on the other hand, yielded stable computations in both cases. 
It should be noted that somewhat surprisingly the energy profile of the SAT-RBF method is also observed to increase over time in Figure \ref{fig:1d_periodic_sin^2_energy_quintic}. 
This seems to be a contradiction to our previous results that the SAT-RBF method is linear energy stable. 
This is caused by numerical rounding errors and time integration rather than by failure of the spatial semidiscretization, cf.\ \cite{nordstrom2008error,offner2019error,offner2018error}. 
Still, the SAT-RBF method yields visibly more accurate energy profiles than the usual RBF method. 
This is also reflected in the quality of their numerical solutions; see Figure \ref{fig:1d_periodic_sin^2}.  
Note that while the usual RBF starts to becomes visibly less accurate already at $t=10$ for both kernels, the SAT-RBF method is only observed to do so at $t=20$ for the cubic kernel and at $t=80$ for the quintic kernel.

\subsubsection{Equidistant vs.\ Nonequidistant Points} 
\label{sub:points}

We consider the same test case as in \S \ref{sub:long-time}, yet for nonequidistant points. 
Below, the potential implication of scattered points that are obtained by adding white uniform noise to a set of equidistant points is investigated. 
That is, the scattered points are given by 
\begin{equation}\label{eq:noise}
  \tilde{x}_0 = 0, \quad 
  \tilde{x}_N = 1, \quad
  \tilde{x}_n = x_n +  Z_n 
  \quad \text{with} \quad 
  x_n = \frac{n}{N} 
  \quad \text{and} \quad 
  Z_n \in \mathcal{U}\left(-\frac{1}{\sigma N},\frac{1}{\sigma N}\right),  
\end{equation}
for $n=1,\dots,N-1$. 
Here, the $Z_n$ are independent, identically distributed, and further assumed to not be correlated with the $x_n$.
Note that the scattered points are closer to equidistant points for larger $\sigma$ and less close to them for small $\sigma$, where $\sigma >0$ in all cases. 

\begin{figure}[!htb]
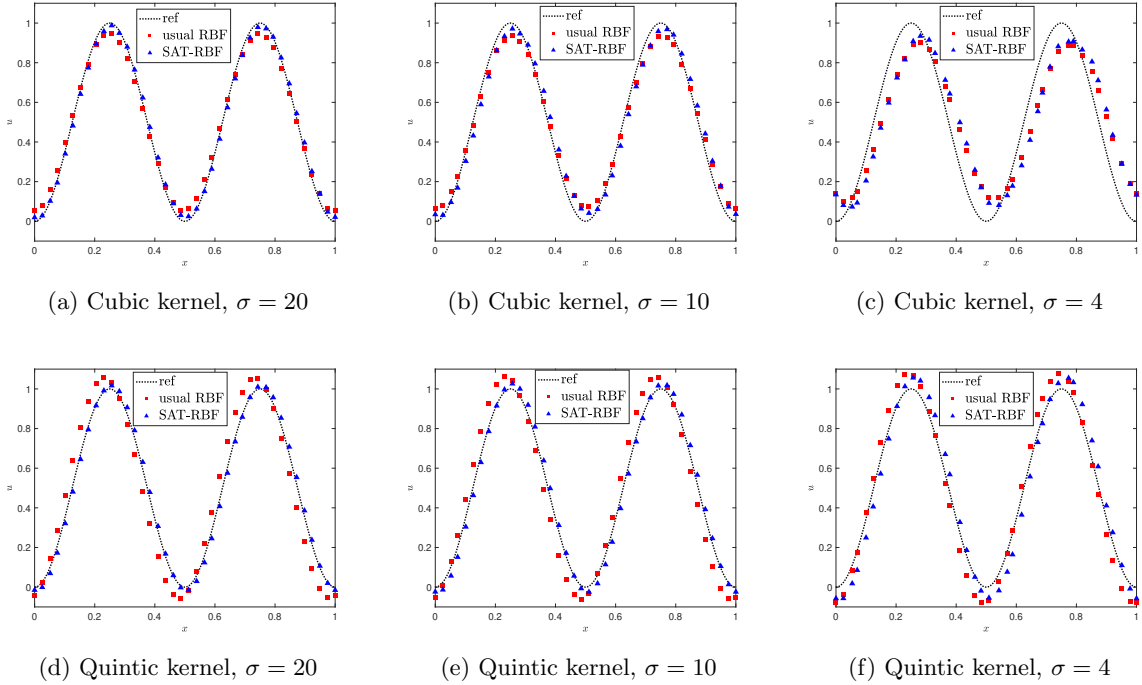

	\centering
	\begin{subfigure}[b]{0.33\textwidth}
		\includegraphics[width=\textwidth]{%
			plots/sol_cubic_sin_t=4_N=40_noise=20}
		\caption{Cubic kernel, $\sigma = 20$}
		\label{fig:1d_periodic_sin^2_cubic_noneq_20}
	\end{subfigure}%
	~ 
	\begin{subfigure}[b]{0.33\textwidth}
		\includegraphics[width=\textwidth]{%
			plots/sol_cubic_sin_t=4_N=40_noise=10}
		\caption{Cubic kernel, $\sigma = 10$}
		\label{fig:1d_periodic_sin^2_cubic_noneq_10}
	\end{subfigure}%
	~ 
	\begin{subfigure}[b]{0.33\textwidth}
		\includegraphics[width=\textwidth]{%
			plots/sol_cubic_sin_t=4_N=40_noise=4}
		\caption{Cubic kernel, $\sigma = 4$}
		\label{fig:1d_periodic_sin^2_cubic_noneq_4}
	\end{subfigure}%
	\\  
	\begin{subfigure}[b]{0.33\textwidth}
		\includegraphics[width=\textwidth]{%
			plots/sol_quintic_sin_t=10_N=40_noise=20}
		\caption{Quintic kernel, $\sigma = 20$}
		\label{fig:1d_periodic_sin^2_quintic_noneq_20}
	\end{subfigure}%
	~ 
	\begin{subfigure}[b]{0.33\textwidth}
		\includegraphics[width=\textwidth]{%
			plots/sol_quintic_sin_t=10_N=40_noise=10}
		\caption{Quintic kernel, $\sigma = 10$}
		\label{fig:1d_periodic_sin^2_quintic_noneq_10}
	\end{subfigure}%
	~ 
	\begin{subfigure}[b]{0.33\textwidth}
		\includegraphics[width=\textwidth]{%
			plots/sol_quintic_sin_t=10_N=40_noise=4}
		\caption{Quintic kernel, $\sigma = 4$}
		\label{fig:1d_periodic_sin^2_quintic_noneq_4}
	\end{subfigure}%
	\caption{Numerical solutions for \eqref{eq:FR_linear-adv} with periodic BCs and initial condition as in \eqref{eq:BC_periodic} at $t=4$ (for the cubic kernel) as well as $t=10$ (for the quintic kernel) for $N=40$ nonequidistant points}
	\label{fig:1d_periodic_sin^2_noneq}
\end{figure}

Figure \ref{fig:1d_periodic_sin^2_noneq} illustrates the influence of going over from equidistant to scattered data points for the usual as well as SAT-RBF method. 
We observe that both methods suffer in accuracy as the points becomes increasingly nonequidistant. 
In particular, it can be noted the advantage of weakly enforcing BCs by the SAT approach, compared to the usual RBF method, decreases for scattered points. 
This behavior will be considered more in future investigations.

\subsubsection{Variable Coefficients} 

Given is the following advection problem from \cite{offner2019error}:
\begin{equation}\label{eq:test_linear-adv-variable-coeff}
\begin{aligned}
	\partial_t u + \partial_x \left(a(x)  u \right) & = 0, \quad && 0 < x < 2\pi , \ t > 0, \\ 
	u(t,0) & = 0, \quad && t > 0, \\ 
	u(0,x) & = u_{\text{init}}(x) , \quad && 0 \leq x \leq 2 \pi, \\ 
\end{aligned}
\end{equation}
where $u_{\text{init}}(x) = \sin(12(x-0.1))$ and $a(x) = x$ is a variable coefficient. 
The exact solution of \eqref{eq:test_linear-adv-variable-coeff} is given by
\begin{equation}
	u(t,x)= \exp(-t) u_{\text{init}}(x\exp(-t)).
\end{equation}

\begin{figure}[!htb]
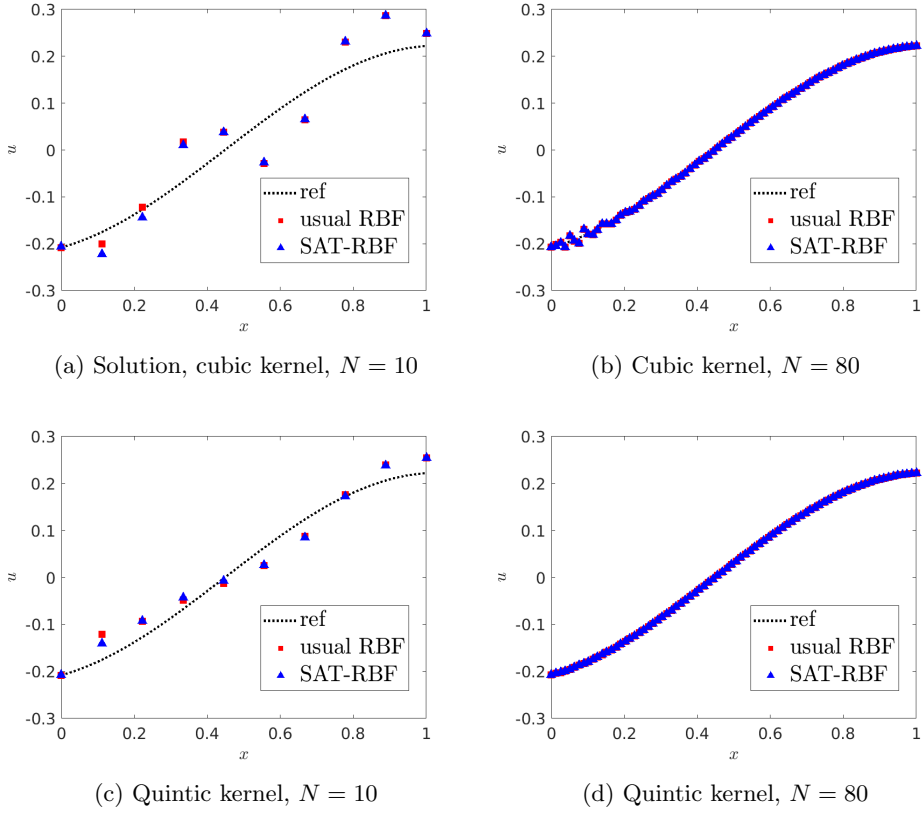

	\centering
	\begin{subfigure}[b]{0.4\textwidth}
		\includegraphics[width=\textwidth]{%
			plots/VarCoeff_Solution_Cubic_10}
		\caption{Solution, cubic kernel, $N=10$}
		\label{fig:1d_variableCoeff_cubic_10} 
	\end{subfigure}%
	~ 
	\begin{subfigure}[b]{0.4\textwidth}
		\includegraphics[width=\textwidth]{%
			plots/VarCoeff_Solution_Cubic_80}
		\caption{Cubic kernel, $N=80$}
		\label{fig:1d_variableCoeff_cubic_80} 
	\end{subfigure}%
	\\  
	\begin{subfigure}[b]{0.4\textwidth}
		\includegraphics[width=\textwidth]{%
			plots/VarCoeff_Solution_Quintic_10}
		\caption{Quintic kernel, $N=10$}
		\label{fig:1d_variableCoeff_quintic_10} 
	\end{subfigure}%
	~ 
	\begin{subfigure}[b]{0.4\textwidth}
		\includegraphics[width=\textwidth]{%
			plots/VarCoeff_Solution_Quintic_80}
		\caption{Quintic kernel, $N=80$}
		\label{fig:1d_variableCoeff_quintic_80} 
	\end{subfigure}%
	\caption{Numerical solutions of \eqref{eq:test_linear-adv-variable-coeff} at $t=1.5$}
	\label{fig:1d_variableCoeff}
\end{figure}

\begin{table}[htb]
	\renewcommand{\arraystretch}{1.3}
	\centering 
	\begin{adjustbox}{width=0.95\textwidth}
		\begin{tabular}{l l c c c c c c c c c c c c }
			\multicolumn{2}{c}{} 
			& \multicolumn{6}{c}{$\ell^1$-errors} 
			& \multicolumn{6}{c}{$\ell^\infty$-errors} \\ \hline 
			\multicolumn{2}{c}{} 
			& & \multicolumn{4}{c}{$N$} & $\mathcal{O}$  
			& & \multicolumn{4}{c}{$N$} & $\mathcal{O}$ \\ \hline
			kernel & Method  & & $10$ & $20$ & $40$ & $80$ & 
			& & $10$ & $20$ & $40$ & $80$ & \\ \hline
			cubic 
			& usual & & 4.7E-02 & 1.8E-02 & 5.3E-03 & 1.4E-03 & 1.6
			& & 9.2E-02 & 4.7E-02 & 2.1E-02 & 1.1E-02 & 1.0\\
			& SAT   & &	4.9E-02 & 1.8E-02 & 5.4E-03 & 1.4E-03 & 1.7
			& & 8.9E-02 & 4.7E-02 & 2.1E-02 & 1.1E-02 & 0.9\\\hline
			
			quintic 
			&usual & & 2.6E-02 & 4.6E-03 & 6.5E-04 & 9.2E-05 & 2.7	
			& & 3.8E-02 & 1.2E-02 & 3.6E-03 & 1.1E-03 & 1.7\\
			& SAT  & & 2.4E-02 & 4.4E-03 & 6.4E-04 & 8.7E-05 & 2.7			
			& & 3.8E-02 & 9.9E-03 & 3.4E-03 & 7.9E-04 & 1.7\\ \hline
		\end{tabular}  
	\end{adjustbox}
	\caption{Errors for \eqref{eq:test_linear-adv-variable-coeff} with inflow BCs at $t=1.5$.}
	\label{tab:1d_varcoeff_errors}
\end{table}

The results of the usual RBF and SAT-RBF method for the variable coefficient problem are illustrated in Figure \ref{eq:test_linear-adv-variable-coeff}. 
As also already observed before, the cubic kernel yields less reliable results than the quintic kernel. 
Furthermore, in the quintic case the SAT-RBF method tends to be slightly more accurate than the usual RBF method, as opposed to the contrary behavior for the cubic kernel. 
This can be noted from Table \ref{tab:1d_varcoeff_errors}. 
There it is also demonstrated that, once we increase the number of nodes, the RBF-SAT method provides slightly more accurate results.

\subsection{Extension to Systems: The Wave Equation with Sinusoidal BCs}

Extending our investigation to one-dimensional systems of advection equations, let us consider the wave equation 
\begin{equation} 
	\partial_{tt} u - c^2 \partial_{xx} u = 0, \quad 0 < x < 1, \ t > 0
\end{equation}
with sinusoidal BC and zero IC. 
Note that the wave equation can be rewritten as a system of linear advection equations, 
\begin{equation} \label{eq:acoustic-problem}
\begin{aligned}
	\partial_t u + c \partial_x v & = 0, \\
	\partial_t v + c \partial_x u & = 0,
\end{aligned}
\end{equation} 
which is sometimes referred to as the one-dimensional \emph{acoustic problem}; see \cite{gelb2008discrete,glaubitz2019stableDG}. 
In this formulation, the IC reads $u(0,x) = v(0,x) = 0$, while the sinusoidal BC is given by 
\begin{equation}\label{eq:sinusBCWave}
	\frac{1}{\sqrt{2}}\begin{pmatrix}1&1\\1&-1\end{pmatrix}\begin{pmatrix}u(t,0)\\v(t,0)\end{pmatrix} = \begin{pmatrix}\sin t\\0\end{pmatrix}, \quad 
	\frac{1}{\sqrt{2}}\begin{pmatrix}1&1\\1&-1\end{pmatrix}\begin{pmatrix}u(t,1)\\v(t,1)\end{pmatrix} = \begin{pmatrix}0\\\sin t\end{pmatrix}.
\end{equation}
Furthermore, we choose $c=1$.
The  boundary  operators are determined from the  eigenvalues and eigenvectors associated to the steady matrix of the system \eqref{eq:acoustic-problem}, given by
\begin{equation}
	X = \frac{1}{\sqrt{2}}\begin{pmatrix}1&1\\1&-1\end{pmatrix}.
\end{equation}
This yields the left and right SAT operators  
\begin{equation}\label{eq:bdoperatorsin}
	\Pi_0 = \begin{pmatrix}-R_0 & 1\\0 & 0\end{pmatrix}X^T
	\begin{pmatrix}u\\v\end{pmatrix}-\begin{pmatrix}\sin t\\0\end{pmatrix}, \quad 
	\Pi_1 = \begin{pmatrix}0&0\\1&-R_1\end{pmatrix}X^T\begin{pmatrix}u\\v\end{pmatrix}-\begin{pmatrix}0\\\sin t\end{pmatrix},
\end{equation}
where $R_0$ and $R_1$ both belong to $(0,1)$.

\begin{figure}[!htb]
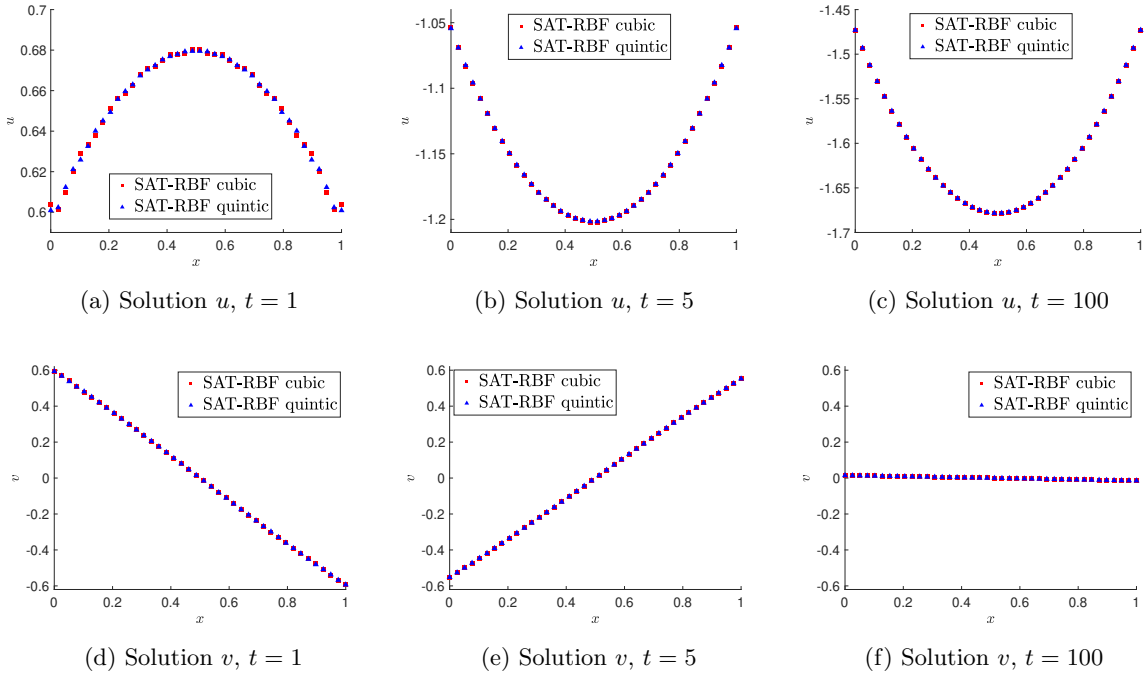

	\centering
	\begin{subfigure}[b]{0.32\textwidth}
		\includegraphics[width=\textwidth]{%
			plots/sol_SATSinusBC_u_N_40_t=1}
		\caption{Solution $u$, $t=1$}
		\label{fig:wave_u_1} 
	\end{subfigure}%
	~
	\begin{subfigure}[b]{0.32\textwidth}
		\includegraphics[width=\textwidth]{%
			plots/sol_SATSinusBC_u_N_40_t=5}
		\caption{Solution $u$, $t=5$}
		\label{fig:wave_u_5} 
	\end{subfigure}
	~
	\begin{subfigure}[b]{0.32\textwidth}
		\includegraphics[width=\textwidth]{%
			plots/sol_SATSinusBC_u_N_40_t=100}
		\caption{Solution $u$, $t=100$}
		\label{fig:wave_u_100} 
	\end{subfigure}
	\\  
	\begin{subfigure}[b]{0.32\textwidth}
		\includegraphics[width=\textwidth]{%
			plots/sol_SATSinusBC_v_N_40_t=1}
		\caption{Solution $v$, $t=1$}
		\label{fig:wave_v_1} 
	\end{subfigure}%
	~
	\begin{subfigure}[b]{0.32\textwidth}
		\includegraphics[width=\textwidth]{%
			plots/sol_SATSinusBC_v_N_40_t=5}
		\caption{Solution $v$, $t=5$}
		\label{fig:wave_v_5} 
	\end{subfigure}%
	~
	\begin{subfigure}[b]{0.32\textwidth}
		\includegraphics[width=\textwidth]{%
			plots/sol_SATSinusBC_v_N_40_t=100}
		\caption{Solution $v$, $t=100$}
		\label{fig:wave_v_100} 
	\end{subfigure}
	\caption{Numerical solutions for the acoustic problem \eqref{eq:acoustic-problem} with zero IC and sinusoidal BCs}
	\label{fig:wave_equation_t}
\end{figure}

Figure \ref{fig:wave_equation_t} reports the numerical results for the two components $u$ and $v$ for the cubic and quintic kernel using $N=40$ nodes at different times. 
For this test case, the usual RBF method yielded quite dubious and inaccurate results. 
Hence, Figure \ref{fig:wave_equation_t} only illustrates the SAT-RBF method, which is demonstrated to also yield accurate results in this test case.
In particular, it can be noted that the SAT-RBF method remains stable even for long times, regardless of the chosen kernel. 
It is also interesting to observe that, though the quintic kernel seems to perform better for short time intervals, the impact of the chosen kernel decreases as we go forward in time.

\subsection{Extension to Two-Dimensions: Scalar Advection With Zero-Inflow Boundary Conditions}

As a last test case, we address the extension of the SAT-RBF method by considering the scalar two-dimensional linear advection problem 
\begin{equation}\label{eq:2d-problem}
\begin{aligned}
  	\partial_t u + \partial_x u  & = 0, \\ 
	u(0,x,y) & = \sin(4\pi x) \left( 1 - \frac{1}{2} \sin(2\pi y) \right), \\
	u(t,0,y) & = 0, 
\end{aligned}
\end{equation}
on ${\Omega=[0,1]^2 \subset \R^2}$. 
The exact solution is given by
\begin{equation}\label{eq:ref-sol} 
\begin{aligned}
  u(t,x,y) 
    \begin{cases}
    		0 & \text{ if } x \leq t, \\ 
		u(0,x-t,y) & \text{ otherwise}.
    \end{cases}
\end{aligned}
\end{equation}
Following the discussion in \S \ref{sub:SAT-2d}, an appropriate discretized SAT term is given by 
\begin{equation}
	SAT = - \frac{1}{2} H^{-1} E_w \mathbf{u}.  
\end{equation} 
Here, $H$ denotes the diagonal \emph{mass matrix} given by 
\begin{equation}
	H = \diag{h_1,\dots,h_N}, \quad 
	h_n = \int_{\Omega} \psi_n(\boldsymbol{x}) \intd \boldsymbol{x},
\end{equation} 
with $\{ \psi_n \}_{n=1}^N$ being the nodal basis that spans the approximation space $V_{N,m}$; 
see \S \ref{sub:RBF-interpol}. 
Furthermore, $\mathbf{u}$ is the vector containing the nodal values of $u_N$ at the centers $\mathbf{x}_n$, $n = 1,\dots,N$, and $E_w$ is the matrix defined by 
\begin{equation}
	E_w(n_1,n_2) = 
		\begin{cases} 
			1 & \text{ if } \ \mathbf{x}_n = ( 0, y_{n} )^T, \\ 
			0 & \text{ otherwise}.
		\end{cases}
\end{equation}

\begin{figure}[!htb]
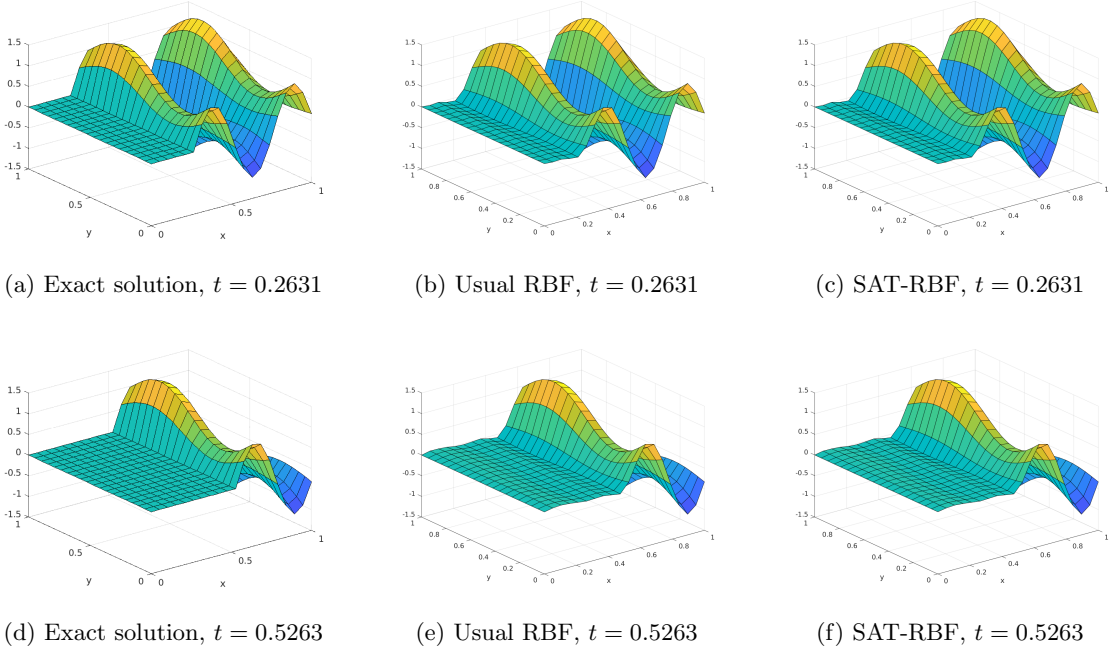

	\centering
	\begin{subfigure}[b]{0.31\textwidth}
		\includegraphics[width=\textwidth]{%
			plots/ExactSolution_0_26314s}
		\caption{Exact solution, $t=0.2631$}
	\end{subfigure}
	~ 
	\begin{subfigure}[b]{0.31\textwidth}
		\includegraphics[width=\textwidth]{%
			plots/UsualOutflowQuintic_0_26314s}
		\caption{Usual RBF, $t=0.2631$}
	\end{subfigure}
	~ 
	\begin{subfigure}[b]{0.31\textwidth}
		\includegraphics[width=\textwidth]{%
			plots/SATOutflowQuintic_0_26316s}
		\caption{SAT-RBF, $t=0.2631$}
	\end{subfigure}
	\\ 
	\begin{subfigure}[b]{0.31\textwidth}
		\includegraphics[width=\textwidth]{%
			plots/ExactSolution_0_52632s}
		\caption{Exact solution, $t=0.5263$}
	\end{subfigure}
	~
	\begin{subfigure}[b]{0.31\textwidth}
		\includegraphics[width=\textwidth]{%
			plots/UsualOutflowQuintic_0_52632s}
		\caption{Usual RBF, $t=0.5263$}
	\end{subfigure} 
	~ 
	\begin{subfigure}[b]{0.31\textwidth}
		\includegraphics[width=\textwidth]{%
			plots/SATOutflowQuintic_0_52632s}
		\caption{SAT-RBF, $t=0.5263$}
	\end{subfigure}
	\caption{(Numerical) solutions of the usual RBF and SAT-RBF method at two different times}
	\label{fig:SlidingSinus}
\end{figure}

Figure \ref{fig:SlidingSinus} illustrates the results of the usual RBF and SAT-method at times $t=0.2631$ and $t=0.5263$. 
In all cases a quintic kernel has been used. 
We performed the same test cases also for a cubic and a multiquadric kernel. 
The results were essentially the same and are therefore not illustrated here. 
Yet, the $L^2$-norms of the error at time $t=0.52632$ for all three kernels are reported in Table \ref{tab:errors}. 

\begin{table}[!htb] 
	\renewcommand{\arraystretch}{1.3}
	\centering
	\begin{tabular}{c c c c c} 
		\toprule
		 & & cubic & quintic & multiquadric \\ \hline
		usual RBF & & 1.14 & 1.36 & 8.82E-1 \\ \hline 
		SAT-RBF & & 1.12 & 1.33 & 8.69E-1 \\ \bottomrule 
	\end{tabular}
	\caption{$L^2$-norms of the error at time $t=0.52632$} 
	\label{tab:errors}
\end{table}

From these errors, it can be noted that the SAT-RBF method yields more accurate solutions compared to the usual RBF method in all cases. 
The increase in accuracy by going over to the SAT-RBF method is only small for this problem. 
Still, this test demonstrates that the RBF-SAT approach can also be used to construct provable stable RBF methods in higher dimensions without decreasing the overall accuracy of the usual RBF method. 
Finally, it should be noted that the CFL number had to be decreased (from $0.1$ for the usual RBF method) to $0.01$ for the SAT-RBF method, in order to obtain stable computations. 
This might be considered as a drawback of the proposed SAT-RBF method. 
At the same time, a reduced time step size is a well-known byproduct of adding dissipation (which is done by the SAT term). 
Future works might elaborate on this observation.  
\section{Concluding Thoughts} 
\label{sec:summary} 

In this work, we investigated stability properties of global RBF methods for linear advection problems. 
In particular, stable incorporations of BCs were addressed. 
Classically, in RBF framework, BCs are implemented strongly. 
This can trigger stability issues in the underlying RBF methods, however. 
Here, we demonstrated how such issues can be remedied by adapting some well-known techniques from the FD and FE communities. 
This revealed some, so far, unexplored connections between RBF methods and classical FD and FE schemes.
In the process, we proposed two novel RBF approaches, respectively referred to as  FR-RBF and SAT-RBF methods. 
By weakly imposing the BCs through numerical fluxes (in the FR-RBF method) or SATs 
(in SAT-RBF method), linear stability could be proven for these schemes.
A list of numerical simulations support our theoretical findings and demonstrate the advantages 
of using stable RBF methods compared to usual (unstable) ones. 
Yet, it should also be stressed that particular methods proposed in this work should not be considered as some kind of ’ultimate’ RBF schemes for linear advection equations.
Rather, we hope that our investigation will pave the way towards a more mature stability theory for RBF methods. 

Future investigations might address SBP-like conditions for RBF methods. 
In particular, a discussion on how the kernel and point distribution could be selected (optimized) to ensure such an SBP property would, in our opinion, be of great interest.


\bibliographystyle{siamplain}
\bibliography{literature}

\end{document}